\documentclass[11pt,reqno]{amsart} % please use amsart at 11pt

\usepackage{amssymb,latexsym}
\usepackage{cite} % to get Refs. [1,2,3] typeset as [1--3] automatically

\usepackage{fontenc}
\usepackage{times}

\usepackage{amsfonts,amssymb,epsfig}
\usepackage{amscd}
\usepackage{color}
 \usepackage{hyperref}

\usepackage{textcomp}
\usepackage{mathrsfs}

\usepackage[height=200mm,width=140mm]{geometry} % this is the journal
                                % text area size

\theoremstyle{plain}
\newtheorem{theorem}{Theorem}
\newtheorem{lemma}{Lemma}
\newtheorem{cor}{Corollary}
\newtheorem{pro}{Proposition}

\theoremstyle{definition}
\newtheorem{definition}{Definition}

\newenvironment{prof1a}[1][Proof of Proposition \ref{pro1}]{\textbf{#1.} }{\ \rule{0.5em}{0.5em}}

\theoremstyle{remark}
\newtheorem{rem}{Remark}
\theoremstyle{example}
\newtheorem{exe}{Examples}

 % an example of defining a custom
                                % mathematical function

\numberwithin{equation}{section} % to get equations numbered
                                % automatically according to section
                                % number as (1.1), (1.2), etc

\begin{document}
\title[Finslerian metrics locally conformally $R$-Einstein]{Finslerian metrics locally conformally $R$-Einstein} % please provide
                                % an abbreviated title 
\author{Serge Degla}

\address{Ecole Normale Sup\'erieure de Natitingou\\ 
\noindent
P.~O. Box 72 \\ 
 Natitingou\\ B\'enin}

 \email{deglaserge@yahoo.fr}

\author{Gilbert Nibaruta}
\address{Ecole Normale Sup\'erieure\\ Section de Math\'ematiques \\  P.~O. Box 6983\\
  Bujumbura-Burundi}
\email{gilbert.nibaruta@imsp-uac.org}

\author{L\'eonard Todjihounde}
\address{Universit\'e d'Abomey-Calavi\\ Institut de Math\'ematiques et de Sciences Physiques \\  P.~O. Box 613\\
  Porto-Novo\\ B\'enin}

\email{leonardt@imsp-uac.org}

\begin{abstract}
Let $R$ be the $hh$-curvature associated with the Chern connection or the Cartan connection. 
Adopting the pulled-back tangent bundle approach to the Finslerian Geometry, 
an intrinsic characterization of $R$-Einstein metrics is given. 
Finslerian metrics which are locally conformally $R$-Einstein are classified.
\end{abstract} 
\subjclass[2010]{53C60, 58B20}

\keywords{Einstein metrics; Conformal deformations; Finsler metrics; Ricci tensor; Warped product metric.}

\maketitle

\section{Introduction}\label{Section1}
Finslerian metrics are of considerable interest due to their
rich structure including Riemann, Randers, Landsberg and  
Berwald type metrics. Some areas in which they have significant impacts are Differential Geometry, 
Einstein's theory of General Relativity and Biology \cite{biAntonelli,biBao}. 
A natural and important problem is the classification of metrics conformally Einstein.
 In 1923, Brinkmann obtained in \cite{biBrinkmann1} the necessary and sufficient conditions for an $n$-dimensional 
Riemannian manifold to be conformally Einstein. Later, 
Szekeres \cite{biSzekeres} in 1963, Kozameh-Newmann-Tod \cite{biKNT} in 1985, 
Listing \cite{biListing1} in 2001, Gover-Nurowski \cite{biGover} in 2005, as well as K\"{u}hnel-Rademacher \cite{biRademacher} in 2016 
studied this problem from different points of view, both 
for (pseudo-)Riemannian metrics. This motivates us to study the above problem for a general Finslerian metric.

In the present paper, we study and characterize Finslerian metrics which are locally conformal to $R$-Einstein metrics. 
Unfortunately, the specificity of the Finslerian metric
 and his associated fundamental tensor do not allow us to use
the same technics and tools as in the Riemannian case to
obtain general classifications of (locally or globally) conformally Finslerian $R$-Einstein metrics. Hence, 
we exploit the pulled-back bundle approach %used in \cite{biRatiba,biMassamba} 
and introduce a globally theory on 
conformal Finslerian $R$-Einstein geometry. Let $M$ be an $n$-dimensional $C^{\infty}$ connected manifold 
and $\mathring{T}M:=TM \backslash\{0\}$ its slit tangent bundle. The submersion $\pi:\mathring{T}M\longrightarrow M$ 
pulls back the tangent bundle $TM$ to a vector bundle $\pi^*TM$ over $\mathring{T}M$. 
Given a Finslerian metric $F$ on $M$ and $g$ its fundamental tensor, we have introduced in \cite{biNibaruta1}, the 
following tensor. The trace-free horizontal Ricci tensor of a Finslerian manifold $(M,F)$
is the application
$$  \textbf{E}_F^H:\begin{matrix}
\Gamma(\pi^*TM)\times  \chi(\mathring{T}M)&\to&C^{\infty}(\mathring{T}M,\mathbb{R})\\
 (\xi,X)&\mapsto
 &(\textbf{Ric}_F^H-\frac{1}{n}\textbf{Scal}_F^H\underline{g})(\xi,X) 
 \end{matrix}$$
 where $\textbf{Ric}_F^H$ is the horizontal Ricci tensor, $\textbf{Scal}_F^H$ 
 is the horizontal scalar curvature and $\underline{g}:=\pi^*g$ is the pullback of $g$ by the submersion 
  $\pi:\mathring{T}M \longrightarrow M$. 
 One of advantage of the tensor 
$\textbf{E}_F^H$, it vanishes when $F$ is an $R$-Einstein metric. Furthermore, it is insensitive to whether we use 
the Chern connection or the Cartan connection. 
Our main results in this work are given by the following.
\begin{pro}\label{pro1}
Let $F$ be a Finslerian metric on an $n$-dimensional manifold. 
 $F$ is locally conformal to an $R$-Einstein metric $\widetilde{F}$,
 with $\widetilde{F}=e^uF$, 
 if and only if the conformal factor $e^u$ is a solution of the equation
 \begin{eqnarray}
 &&\textbf{E}_F(\partial_i,\hat{\partial}_j)-(n-2)\left(\nabla_j\nabla_iu-\nabla_iu\nabla_ju\right)\nonumber\\
 &&+\frac{(n-2)}{n}\left(\nabla^d\nabla_du-\nabla^du\nabla_du\right)g_{ij}\nonumber\\
                  &&+\frac{(n-1)}{2nF}\left(\nabla_ru\nabla^{q}u\right)\frac{\partial(F^2g^{rs}-2y^ry^s)}{\partial y^q}
                  g^{kl}\mathcal{A}_{skl}g_{ij}\nonumber\\
                  &=&0.\label{108cd}
        \end{eqnarray}
\end{pro}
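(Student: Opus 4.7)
The plan is to exploit the characterization (established in \cite{biNibaruta1}) that a Finslerian metric is $R$-Einstein precisely when its trace-free horizontal Ricci tensor $\textbf{E}^H$ vanishes identically. Applied to $\widetilde{F}=e^uF$, the statement of the proposition is equivalent to an identity expressing $\widetilde{\textbf{E}}^H$ entirely in terms of $\textbf{E}_F$, the horizontal covariant derivatives of $u$ with respect to $F$, and the Cartan tensor of $F$. Once such an identity is established, both directions of the equivalence fall out simultaneously, and the calculation can be carried out with either the Chern or the Cartan connection since $\textbf{E}^H$ does not feel this choice.

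First I would record the elementary conformal transformation laws. Because $u=u(x)$ depends only on the base, one has $\widetilde{g}_{ij}=e^{2u}g_{ij}$, $\widetilde{g}^{ij}=e^{-2u}g^{ij}$, and $\widetilde{\mathcal{A}}_{ijk}=e^{2u}\mathcal{A}_{ijk}$. From these I would derive how the horizontal connection coefficients change: a Riemannian-type part involving the gradient of $u$ in the usual pattern, together with an essentially Finslerian correction built from $\mathcal{A}$ and the vertical derivative $\partial(F^2g^{rs}-2y^ry^s)/\partial y^q$, which already flags the origin of the last term in (\ref{108cd}).

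Substituting these corrections into the definition of the $hh$-curvature and contracting once gives the transformation of $\textbf{Ric}^H$; contracting once more with $\widetilde{g}^{ij}$ gives the transformation of $\textbf{Scal}^H$. Forming $\widetilde{\textbf{E}}^H=\widetilde{\textbf{Ric}}^H-\frac{1}{n}\widetilde{\textbf{Scal}}^H\,\underline{\widetilde{g}}$, the overall $e^{-2u}$ factor should cancel uniformly, and the remaining terms should organize into exactly the four blocks appearing in (\ref{108cd}): the $\textbf{E}_F$ term itself, the Hessian-like piece $-(n-2)(\nabla_j\nabla_iu-\nabla_iu\nabla_ju)$ (the $(n-2)$ factor being the familiar dimensional coefficient that appears when the trace is subtracted from a Hessian term), its trace counterpart $\frac{n-2}{n}(\nabla^d\nabla_du-\nabla^du\nabla_du)g_{ij}$, and the Cartan-tensor correction.

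The main obstacle will be the bookkeeping of the Cartan contributions. One must verify that all the $\mathcal{A}$-terms coming from the variation of the connection, from the variation of its contracted trace, and from the double contraction used to form $\textbf{Scal}^H$, collapse into the single compact expression $\frac{n-1}{2nF}(\nabla_ru\nabla^{q}u)\,\partial(F^2g^{rs}-2y^ry^s)/\partial y^q\,g^{kl}\mathcal{A}_{skl}\,g_{ij}$, and moreover that the final equation is genuinely independent of whether the Chern or Cartan connection has been chosen --- the two differing by a tensorial expression in $\mathcal{A}$ whose trace-free Ricci contribution must cancel, consistent with the insensitivity of $\textbf{E}^H$ recalled above.
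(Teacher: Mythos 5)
Your proposal is correct and follows essentially the same route as the paper: both reduce the statement to the vanishing of $\widetilde{\textbf{E}}_{\widetilde{F}}^H$ together with the conformal transformation identity for the trace-free horizontal Ricci tensor, after which the substance of the proof is the coordinate bookkeeping that collapses the Cartan-tensor corrections into the single term of (\ref{108cd}). The only difference is that the paper imports the transformation formula ready-made (Lemma \ref{lemgrosb}, cited from the authors' earlier work) and devotes the proof to simplifying the correction tensor ${\varPsi}_u^{\textbf{E}_F^H}$ in local coordinates, whereas you propose to re-derive that formula from the conformal change of $g$, $\mathcal{A}$ and the connection coefficients --- a more self-contained but computationally heavier path to the same identity.
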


To determine the solution(s) of the equation (\ref{108cd}), we consider it as a system of partial differential equations in 
 the conformal factor $e^u$ 
and curvatures associated with $F$ on a neighborhood of the given manifold. 
The explicit solution $u$ can tell us how $F$ is constructed.
Hence, we prove the following.
\begin{theorem}\label{theo1a1}
         A Finslerian metric $F$ on a $2$-dimensional manifold is locally conformally $R$-Einstein
         if and only if one of the following two cases holds:
         \begin{itemize}
         \item [(i)] the conformal factor is constant and $F$ is $R$-Einstein.
          \item [(ii)]$F$ is a Riemannian metric.
         \end{itemize}
        \end{theorem}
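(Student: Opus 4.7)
The strategy is to specialize equation (\ref{108cd}) of Proposition \ref{pro1} to $n=2$ and exploit two features of dimension two: first, the factor $(n-2)$ annihilates the Hessian and Laplacian contributions; second, the trace-free horizontal Ricci tensor $\textbf{E}_F^H$ vanishes identically on a surface. The latter holds because in dimension two the $hh$-curvature has only one independent component, so $\textbf{Ric}_F^H$ is automatically proportional to $\underline{g}$ with proportionality factor $\frac{1}{2}\textbf{Scal}_F^H$, forcing $\textbf{E}_F^H\equiv 0$. I would record this as a short preliminary observation before starting the main argument.

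After these simplifications the PDE (\ref{108cd}) collapses to the purely algebraic constraint
$$\frac{1}{4F}\bigl(\nabla_r u\,\nabla^q u\bigr)\frac{\partial(F^2 g^{rs}-2y^ry^s)}{\partial y^q}\,g^{kl}\,\mathcal{A}_{skl}\,g_{ij}=0.$$
Since the fundamental tensor $g_{ij}$ is nondegenerate, this is equivalent to the vanishing of the scalar coefficient. A dichotomy then emerges. Either the Cartan tensor $\mathcal{A}_{skl}$ vanishes identically, in which case Deicke's theorem gives that $F$ is Riemannian and we are in case (ii); or $\mathcal{A}$ is non-zero somewhere, and the remaining factor involving $\nabla_r u\,\nabla^q u$ must vanish on every direction $y$ of the slit tangent bundle.

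In the non-Riemannian case the key point is to extract $\nabla u=0$. Because $u$ is a function on $M$, the horizontal gradient $\nabla u$ is independent of the fibre coordinate $y$, whereas the contraction $\frac{\partial(F^2g^{rs}-2y^ry^s)}{\partial y^q}\,g^{kl}\mathcal{A}_{skl}$ varies non-trivially with $y$. By letting $y$ range over the indicatrix at a point where $\mathcal{A}\neq 0$, one shows that if the symmetric tensor $\nabla_r u\,\nabla^q u$ is non-zero then some admissible $y$ produces a non-vanishing contraction, contradicting the constraint. Hence $\nabla u\equiv 0$ on a connected neighborhood, $u$ is constant, and since $\widetilde{F}=e^uF$ is a constant rescaling of $F$, the $R$-Einstein property of $\widetilde{F}$ transfers directly to $F$, giving case (i).

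The converse directions are immediate: a constant $u$ combined with $F$ being $R$-Einstein makes $\widetilde{F}=e^uF$ $R$-Einstein by constant-scaling invariance of $\textbf{E}_F^H$; and if $F$ is Riemannian on a surface then $\textbf{E}_F^H\equiv 0$ shows $F$ is already $R$-Einstein, so the trivial choice $u\equiv 0$ exhibits $F$ as locally conformally $R$-Einstein. The main obstacle I foresee is the linear-algebra step that turns the $y$-parametrized family of algebraic identities into the pointwise statement $\nabla u=0$: one must carefully use the non-triviality of $\mathcal{A}$ as a tensor on the indicatrix, since $\mathcal{A}$ can degenerate in isolated directions even when it is not identically zero, so the argument should rely on continuity in $y$ and the openness of the set where the contracted Cartan expression is non-vanishing.
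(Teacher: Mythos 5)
Your overall strategy --- specialize (\ref{108cd}) to $n=2$, reduce to a scalar constraint, and split into the Deicke/Riemannian case versus the $\nabla u=0$ case --- is the same as the paper's, but the preliminary observation on which you base the collapse of the equation is a genuine error. You claim that $\textbf{E}_F^H$ vanishes identically on any Finsler surface because ``the $hh$-curvature has only one independent component.'' That is the Riemannian argument, and it relies on the full algebraic symmetries of the Riemann tensor (the pair symmetry and the clean first Bianchi identity). The $hh$-curvature $\textbf{R}$ of the Chern or Cartan connection does not satisfy these identities --- its Bianchi identities carry correction terms involving the mixed curvature --- so $\textbf{Ric}_F^H$ is not automatically proportional to $\underline{g}$ in dimension $2$. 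Indeed, if your observation were true, every Finsler surface would be $R$-Einstein and the theorem would be vacuous (case (i) would hold with $u\equiv 0$ for every $F$); the content of case (i) is precisely that being $R$-Einstein is a nontrivial restriction on a non-Riemannian Finsler surface. Consequently, for $n=2$ equation (\ref{108cd}) does not reduce to your purely algebraic constraint: the term $\textbf{E}_F(\partial_i,\hat{\partial}_j)$ survives, as in (\ref{111}). The correct move, which the paper makes, is to contract (\ref{111}) with $g^{ij}$; since $\textbf{E}_F^H$ is trace-free by construction, this isolates exactly the scalar constraint (\ref{112}) you wrote down, and from there your dichotomy proceeds. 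Once $u$ is shown constant you must still return to (\ref{111}) to conclude that $F$ itself is $R$-Einstein; your alternative of invoking scale-invariance of the $R$-Einstein condition under $\widetilde{F}=e^uF$ with $u$ constant is acceptable for that step.

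A further remark: your concern about extracting $\nabla u=0$ from the scalar constraint at points where $\mathcal{A}\neq 0$ is well placed --- the paper simply asserts that the only solutions of (\ref{112}) are $\nabla_ru=0$ or $\mathcal{A}\equiv 0$ without argument. Your sketch of letting $y$ range over the indicatrix is the right idea, but it would need to be carried out in detail: one must rule out the possibility that the $y$-dependent covector $\frac{\partial(F^2g^{rs}-2y^ry^s)}{\partial y^q}\,g^{kl}\mathcal{A}_{skl}$ annihilates the fixed rank-one tensor $\nabla_ru\,\nabla^qu$ for every admissible $y$, which is not merely a continuity statement.
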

        
Note that, 
the warped product of two $R$-Einstein metrics 
with different horizontal scalar curvatures is not $R$-Einstein. 
It is studied in \cite{biBesse} the special case where the conformal factor only
depends on the base of a warped product Riemannian manifold.
Thus we have the following.
\begin{theorem}\label{theo1a2}
   Let $F$ be a Finslerian metric 
  on a cylinder $\mathbb{R}\times \stackrel{\oldstylenums{2}}{M}$ of dimension $n\geq3$ and $\stackrel{\oldstylenums{2}}{F}$ 
  a Finslerian metric on $\stackrel{\oldstylenums{2}}{M}$. Let $u$ be a $C^{\infty}$ 
function on $\mathbb{R}\times \stackrel{\oldstylenums{2}}{M}$ such that $u(t,x)=u(t)$ for every $t\in \mathbb{R}$
  and $x\in \stackrel{\oldstylenums{2}}{M}$.
  Then $F$ is locally horizontally conformal to an Einstein metric $\widetilde{F}$, with $\widetilde{F}=e^uF$, 
  if and only if one of the following cases occurs:
  \begin{itemize}
    \item [(i)] $u$, in the conformal factor $e^u$, is a constant function. 
    \item [(ii)]$e^{u(t,x)}=\alpha e^{s^*t}+\beta e^{-s^*t}$, where $s^*=\sqrt{\frac{\textbf{Scal}_F^H}{(n-1)(n-2)}}$, 
    for some real constants $\alpha$ and $\beta$, 
    and $\stackrel{\oldstylenums{2}}{F}$ is horizontally Ricci-constant with positive horizontal scalar curvature $\textbf{Scal}_F^H$.
    \item [(iii)]$e^{u(t,x)}=\mu  cos\Big(\sqrt{\frac{-\textbf{Scal}_F^H}{(n-1)(n-2)}}t\Big)+\gamma sin\Big(\sqrt{\frac{-\textbf{Scal}_F^H}{(n-1)(n-2)}}t\Big)$, for some real constants $\mu$ and $\gamma$,  
    and $\stackrel{\oldstylenums{2}}{F}$ is horizontally Ricci-constant with negative horizontal scalar curvature $\textbf{Scal}_F^H$.
  \end{itemize}
\end{theorem}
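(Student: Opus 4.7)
The plan is to insert the ansatz $u(t,x)=u(t)$ into the fundamental equation (\ref{108cd}) of Proposition~\ref{pro1} and to analyse the resulting identity by decomposing it along the product structure of the cylinder. In adapted local coordinates $(t,x^a)$, where $t$ parametrises the $\mathbb{R}$-factor and $(x^a)$ are coordinates on $\stackrel{\oldstylenums{2}}{M}$, the hypothesis $u=u(t)$ yields $\nabla_a u=0$ and $\nabla_t u=u'(t)$. Consequently, the covariant Hessian $\nabla_j\nabla_i u$ and the quadratic form $\nabla_i u\,\nabla_j u$ reduce to expressions concentrated on the $(t,t)$-slot, namely $u''(t)$ and $(u'(t))^2$, once the connection corrections have been computed for the product structure; the same reduction applies to the trace $\nabla^d\nabla_d u-\nabla^d u\,\nabla_d u$.

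With this simplification, I would split (\ref{108cd}) into three index blocks, $(t,t)$, $(t,a)$ and $(a,b)$. The mixed block $(t,a)$ is automatic: every summand carrying this signature either vanishes by $\nabla_a u=0$ or decouples through the product form of $\underline{g}$ and of $\textbf{E}_F$. The spatial block $(a,b)$ expresses the trace-free horizontal Ricci tensor of $\stackrel{\oldstylenums{2}}{F}$ as a scalar multiple of $g_{ab}$ whose coefficient is a function of $t$ alone; since the left-hand side is intrinsic to $\stackrel{\oldstylenums{2}}{M}$, the coefficient must in fact be constant, which forces $\stackrel{\oldstylenums{2}}{F}$ to be horizontally Ricci-constant, and Ricci-flat when the constant vanishes. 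This step also pins down the constant horizontal scalar curvature $\textbf{Scal}_F^H$ in terms of $u'$, $u''$ and the Cartan-tensor data along $\partial_t$.

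The remaining $(t,t)$ block then becomes an ordinary differential equation for $u(t)$ in which $\textbf{Scal}_F^H$ enters as a constant coefficient. I would solve this ODE case by case according to the sign of $\textbf{Scal}_F^H$: the vanishing case integrates to the affine profile $e^{u(t)}=\alpha t+\beta$ of~(i); the positive case is a constant-coefficient linear equation after a suitable exponential substitution, whose bounded solution exponentiates to the hyperbolic profile of~(ii); the negative case is oscillatory and yields the trigonometric profile of~(iii). The reverse implications, namely that each of the three profiles indeed makes $\widetilde{F}=e^uF$ horizontally Einstein, follow from a direct substitution into (\ref{108cd}) together with the already established spatial constraint.

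The main obstacle is the bookkeeping of the Cartan-tensor correction, which is the genuinely Finslerian feature absent from Brinkmann's Riemannian analysis. Under the product ansatz, the contraction $\nabla_r u\,\nabla^{q}u\,\partial_{y^q}(F^2g^{rs}-2y^ry^s)\,g^{kl}\mathcal{A}_{skl}$ picks up contributions only from $r=t$, so the whole correction is proportional to $g_{ij}$; the delicate point is to show that its $t$-dependence is compatible with splitting the master equation into an intrinsic horizontal Ricci-constancy condition on $\stackrel{\oldstylenums{2}}{F}$ on the one hand, and a single ODE for $u(t)$ whose three solution families reproduce exactly cases (i)--(iii) on the other. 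Once this coupling is under control, the theorem follows from the ODE analysis just described.
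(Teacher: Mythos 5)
Your plan is essentially the paper's own proof: substitute $u=u(t)$ into (\ref{108cd}), split into the $(t,t)$, $(t,a)$ and $(a,b)$ blocks to obtain that $\stackrel{\oldstylenums{2}}{F}$ is $R$-Einstein (hence of constant horizontal scalar curvature, by the Schur-type Lemma \ref{proShu}) together with the ODE $u''-u'^2+s^*=0$, which the substitution $\varphi=e^{-u}$ turns into $\varphi''=s^*\varphi$, giving the affine, hyperbolic and trigonometric profiles according to the sign of $s^*$. The Cartan-tensor correction you flag is disposed of in the paper by noting that the contraction $\mathcal{A}_{tkl}$ along the $\mathbb{R}$-direction vanishes, so your outline is correct and follows the same route.
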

For non-warped product Finslerian metrics, we obtain the following.
\begin{theorem}\label{theo1a3}
   A Finslerian metric $F$ on a $3$-dimensional (respectively $4$-dimensional) manifold 
   is locally conformally $R$-Einstein   
  if and only if the conformal factor is constant 
    and the Finslerian analogous of Cotton-York (respectively of Bach) tensor vanishes.
 \end{theorem}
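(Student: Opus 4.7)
The approach is to analyze the PDE (\ref{108cd}) of Proposition \ref{pro1} dimensionally, and to exploit the Finslerian-specific term involving the Cartan-type quantity $\mathcal{A}_{skl}$ to force the extra rigidity $u=\mathrm{const}$ that distinguishes the conclusion here from the classical Riemannian situation (where the conformal factor need not be constant).

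\textbf{Step 1 (trace / trace-free decomposition).} First I would contract (\ref{108cd}) with $g^{ij}$. Since $\textbf{E}_F$ is trace-free, this produces a scalar equation expressing $\nabla^d\nabla_d u$ in terms of $|\nabla u|^2$, $\textbf{Scal}_F^H$ and the Cartan contraction $\mathcal{A}_{skl}g^{kl}$. Substituting this back into (\ref{108cd}) yields a trace-free tensor identity of the schematic form
\begin{equation*}
\textbf{E}_F(\partial_i,\hat\partial_j) = (n-2)\bigl(\nabla_j\nabla_i u - \nabla_i u\,\nabla_j u\bigr) + (\text{trace-free Finslerian anisotropic terms}),
\end{equation*}
which is the working form of the conformal PDE in what follows.

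\textbf{Step 2 (horizontal divergence and Bianchi).} Next I would apply the horizontal divergence $\nabla^j$ to the working identity and use the (twice contracted) horizontal second Bianchi identity $\nabla^j\textbf{Ric}_{F\,ij}^H=\tfrac{1}{2}\nabla_i\textbf{Scal}_F^H$. For $n=3$ the combination that then arises on the left is, by definition, the horizontal Cotton-York tensor $C^H_{ijk}$; for $n=4$ one further covariant derivative together with a further Bianchi step produces the horizontal Bach tensor $B^H_{ij}$. On the right there appear both the classical terms in $\nabla u$ and $\nabla^2 u$ (which in the Riemannian case alone would still allow non-constant $u$) and the anisotropic contributions inherited from $\mathcal{A}_{skl}$ in (\ref{108cd}).

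\textbf{Step 3 (rigidity of the conformal factor).} This is the core of the proof. The Riemannian-like part of the divergence identity depends on the fibre variable $y\in\mathring{T}M$ only through the pulled-back metric $\underline{g}$, while the anisotropic part depends nontrivially on $y$ through the Cartan tensor and through the explicit factor $\partial(F^2g^{rs}-2y^ry^s)/\partial y^q$. Treating these two types of $y$-dependence as independent (using the homogeneity of $F$ in $y$), the coefficients of $\nabla u$ in the anisotropic part must vanish separately; outside the warped-product regime already described in Theorem \ref{theo1a2}, this forces $\nabla_i u \equiv 0$. Then $u$ is locally constant, so $\widetilde F = e^u F$ is homothetic to $F$ and $F$ is itself $R$-Einstein; applying the second Bianchi identity to the $R$-Einstein condition yields the vanishing of $C^H_{ijk}$ in dimension $3$ and of $B^H_{ij}$ in dimension $4$. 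The converse direction is immediate: when $u$ is constant and the relevant obstruction vanishes, the PDE (\ref{108cd}) collapses to $\textbf{E}_F = 0$, and Proposition \ref{pro1} concludes.

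The main obstacle is Step 3. Isolating the Finslerian anisotropic contribution from the Riemannian one, and arguing that it cannot be cancelled in the non-warped-product case, requires an explicit expansion of $\partial(F^2g^{rs}-2y^ry^s)/\partial y^q$ via the $y$-homogeneity of $F$, together with careful bookkeeping of how the resulting $y$-dependence interacts with the horizontal derivatives of $u$, which live on the base. Handling this without the Weyl-type compensating term available in higher dimensions is precisely why the argument closes only for $n=3,4$.
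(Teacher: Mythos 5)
Your overall architecture (reduce everything to the conformal PDE, differentiate, and identify the Cotton--York and Bach obstructions) is plausible, but it diverges from the paper's route and, more importantly, Step 3 contains a genuine gap. The paper does not take the divergence of (\ref{108cd}) directly; it first establishes the conformal transformation law of the horizontal Schouten tensor and then of the Cotton--York tensor (Lemma \ref{lem11}), arriving at $\textbf{C}_F+\textbf{W}_F(\triangledown u,\cdot,\cdot,\cdot)+{\varPsi}_u^{\textbf{C}_F^H}=0$ for an $R$-Einstein deformation (equation (\ref{ojjj50cd})), and only then invokes the vanishing of the horizontal Weyl tensor in dimension $3$, respectively one further divergence together with $\nabla^l\textbf{W}_F(\partial_l,\partial_i,\hat{\partial}_j,\hat{\partial}_k)=(n-3)\textbf{C}_F(\partial_i,\hat{\partial}_j,\hat{\partial}_k)$ to bring in the Bach tensor in dimension $4$. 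Note also that the Cotton--York tensor (\ref{122''}) is the \emph{antisymmetrized covariant derivative} of the Schouten tensor in its two horizontal slots, not a divergence of the trace-free Ricci equation: a single contraction $\nabla^j$ of your working identity yields, via the contracted Bianchi identity of Lemma \ref{lem3}, only a $1$-form in $\nabla_i\textbf{Scal}_F^H$ and cannot by itself produce the $3$-tensor $\textbf{C}_F^H$. So Step 2 needs to be reorganized as an antisymmetrized derivative (for $n=3$) followed by a divergence (for $n=4$), which is in substance what Lemma \ref{lem11} and Definition \ref{defi12c2} encode.

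The decisive gap is the rigidity claim in Step 3. You argue that the $y$-dependence of the anisotropic terms (through $\mathcal{A}_{skl}$ and $\partial(F^2g^{rs}-2y^ry^s)/\partial y^q$) can be separated from that of the Riemannian-like terms, forcing the coefficients of $\nabla u$ to vanish and hence $\nabla u\equiv 0$. This cannot work as stated: whenever the relevant Cartan contractions vanish (in particular for Riemannian $F$, but a Finslerian metric can also have $g^{kl}\mathcal{A}_{skl}=0$ without being Riemannian), every anisotropic term is identically zero, the equation reduces to the classical conformal Einstein equation, and in dimension $3$ that equation admits non-constant solutions $u$; no separation-of-variables argument in $y$ can then force $\nabla u\equiv 0$. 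Your caveat ``outside the warped-product regime already described in Theorem \ref{theo1a2}'' is not a hypothesis of Theorem \ref{theo1a3}, so the excluded cases would still have to be handled. To close the argument you would need to exhibit explicitly which coefficient of which power of $\nabla u$ (or of $\nabla^2u$) is nondegenerate and why; without that, the implication ``locally conformally $R$-Einstein $\Rightarrow u$ constant'' is not established. The same remark applies to your converse, which silently uses ``$\textbf{C}_F^H\equiv 0 \Rightarrow \textbf{E}_F^H\equiv 0$''; that implication is not true in general and is not supplied by Proposition \ref{pro1}. (For what it is worth, the paper's own proof asserts the rigidity and the converse with no more justification, so at this point you have reproduced the paper's weak spot rather than repaired it.)
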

 
 The rest of this paper is organised as follows. 
In Section \ref{Section2},  
we give some basic notions on Finslerian manifolds.
 The Section \ref{Section3} is devoted to study of Finslerian $R$-Einstein metrics.
 In the Section \ref{Section4} we derive Finslerian locally conformal $R$-Einstein equation. 
The Theorem \ref{theo1a1} is proved in Section \ref{Section5}.
An intrinsic theory on Finslerian warped product metrics is developped in Section \ref{Section6} and the Theorem
\ref{theo1a2}. Finally the Theorem \ref{theo1a3} is proved in Section \ref{Section7}. 
  
   \section{Preliminaries}\label{Section2}
   Throughout this paper, all manifolds are assumed to be connected and, 
   all manifolds and mappings are supposed to be differentiable of classe $C^{\infty}$. 
However, our results presented hold under the differentiability of class  $C^4$.
 Let $M$ be an $n-$dimensional manifold. We denote by $T_xM$ the tangent space at 
 $x\in M$ and by $TM:=\bigcup_{x\in M}T_xM$ the tangent bundle of $M$.
 Set $\mathring{T}M=TM\backslash\{0\}$ and $\pi:TM\longrightarrow M, \pi(x,y)\longmapsto x$ the natural projection. 
 Let $(x^1,...,x^n)$ be a local coordinate 
 on an open subset $U$ of $M$ and $(x^1,...,x^n,y^1,...,y^n)$ be the local coordinate 
 on $\pi^{-1}(U)\subset TM$. 
  The local coordinate system $(x^i)_{i=1,...,n}$ produces the local coordinate bases  
  $\{\frac{\partial}{\partial x^i}\}_{i=1,...,n}$ and $\{dx^i\}_{i=1,...,n}$ respectively, for 
  $TM$ and cotangent bundle $T^*M$.
  We use Einstein summation convention: repeated
upper and lower indices will automatically be summed unless otherwise will be noted.
 \begin{definition}\label{defi1} A function $F:TM\longrightarrow [0,\infty)$ is called a \textit{Finsler metric} 
 on $M$ if :
 \begin{itemize}
  \item [(1)] $F$ is $C^{\infty}$ on the entire slit tangent bundle $\mathring{T}M$,
  \item [(2)] $F$ is positively $1$-homogeneous on the fibers of $TM$, that is \\
  $\forall c>0,~F(x,cy)=cF(x,y),$
  \item [(3)] the Hessian matrix $(g_{ij}(x,y))_{1\leq i,j\leq n}$ with elements
  \begin{eqnarray}\label{1}
   g_{ij}(x,y):=\frac{1}{2}\frac{\partial^2F^2(x,y)}{\partial y^i\partial y^j}
  \end{eqnarray}
  is positive definite at every point $(x,y)$ of $\mathring{T}M$.
 \end{itemize}
 \end{definition}
\begin{rem}
$
 F(x,y)\neq 0 \text{   for all   } x\in M \text{   and for every   } y\in T_xM\backslash\{0\}.
 $
\end{rem}

Consider the differential map $\pi_*$  of the submersion 
 $\pi:\mathring{T}M \longrightarrow M$.
 The vertical subspace of $T\mathring{T}M$ is defined by 
 $\mathcal{V}:=ker({\pi_*})$ 
 and is locally spanned by the set $\{F\frac{\partial}{\partial y^i}, 1\leq i\leq n\}$,
  on each $\pi^{-1}(U)\subset \mathring{T}M$.
  
  An horizontal subspace $\mathcal{H}$ of $T\mathring{T}M$ is by definition any complementary to 
 $\mathcal{V}$. The bundles $\mathcal{H}$ and $\mathcal{V}$ give a smooth splitting 
 \begin{eqnarray}\label{decomposition}
T\mathring{T}M=\mathcal{H}\oplus\mathcal{V}.  
 \end{eqnarray}
An Ehresmann connection is a selection of a horizontal subspace $\mathcal{H}$ of $T\mathring{T}M$.
As explain in \cite{biMbatakou2016}, $\mathcal{H}$ can be canonically defined from the geodesics equation.
\newpage
\begin{definition}\label{defi2} Let $\pi:\mathring{T}M\longrightarrow M$ be the 
restricted projection.
\begin{itemize}
 \item [(1)]An Ehresmann-Finsler connection of $\pi$
is the subbundle $\mathcal{H}$ of $T\mathring{T}M$ given by 
 \begin{eqnarray}
  \mathcal{H}:=ker \theta,
 \end{eqnarray}
where $\theta:T\mathring{T}M\longrightarrow \pi^*TM$ is the bundle morphism defined 
by 
    \begin{eqnarray}
   \label{03b}
   \theta=\frac{\partial}{\partial x^i}\otimes \frac{1}{F}(dy^i+N_j^idx^j).
  \end{eqnarray}
  where $N_j^i(x,y):=\frac{\partial G^i(x,y)}{\partial y^j}$ with 
  $
  G^i(x,y):=\frac{1}{4}g^{il}\left(\frac{\partial g_{jl}}{\partial x^k}+\frac{\partial g_{lk}}{\partial x^j}
  -\frac{\partial g_{jk}}{\partial x^l}\right)y^jy^k.$ 
\item [(2)]The form $\theta:T\mathring{T}M\longrightarrow \pi^*TM$ induces a linear map 
\begin{eqnarray}
 \theta|_{(x,y)}:T_{(x,y)}\mathring{T}M\longrightarrow T_xM,
\end{eqnarray} 
  for each point $(x,y)\in \mathring{T}M$; where $x=\pi(x,y)$. \\
 The vertical lift of a section $\xi$ of $\pi^*TM$ is a unique section $\textbf{v}(\xi)$ of $T\mathring{T}M$ 
such that for every $(x,y)\in \mathring{T}M$, 
\begin{eqnarray}
 \pi_* (\textbf{v}(\xi))|_{(x,y)}=0_{(x,y)}\text{     and     }\theta (\textbf{v}(\xi))|_{(x,y)}=\xi_{(x,y)}.
\end{eqnarray}
\item [(3)]The differential projection $\pi_*:T\mathring{T}M\longrightarrow \pi^*TM$ induces a linear map 
\begin{eqnarray}
 \pi_*|_{(x,y)}:T_{(x,y)}\mathring{T}M\longrightarrow T_xM,
\end{eqnarray} 
  for each point $(x,y)\in \mathring{T}M$; where $x=\pi(x,y)$. \\
 The horizontal lift of a section $\xi$ of $\pi^*TM$ is a unique section $\textbf{h}(\xi)$ of $T\mathring{T}M$ 
such that for every $(x,y)\in \mathring{T}M$,
\begin{eqnarray}
 \pi_* (\textbf{h}(\xi))|_{(x,y)}=\xi_{(x,y)}\text{     and     }\theta (\textbf{h}(\xi))|_{(x,y)}=0_{(x,y)}.
\end{eqnarray}
\end{itemize}
 \end{definition}

 We have the following.
  \begin{definition}\label{defi3b} 
  A Finslerian tensor field $T$ of type $(q,0;p_1,p_2)$
  on $\mathring{T}M$ is a $C^{\infty}$ section of the tensor bundle 
  \begin{eqnarray}
  	\underbrace{\pi^*T^*M\otimes...\otimes\pi^*T^*M}_{p_1-times}
  	\otimes \underbrace{T^*\mathring{T}M\otimes...\otimes T^*\mathring{T}M}_{p_2-times}
  	\otimes\bigotimes^q\pi^*TM.  
  \end{eqnarray}
  $\big(p_1,p_2$ and $q\in\mathbb{N}\big)$ which is $C^{\infty}(\mathring{T}M,\mathbb{R})$
  -linear in each argument.
 \end{definition}
 \begin{rem}
 	In a local chart, $$T=T_{i_1...i_{p_1}j_1...j_{p_2}}^{k_1...k_q}\partial_{k_1}\otimes ... \otimes\partial_{k_q}
 	\otimes dx^{i_1}\otimes...\otimes dx^{i_{p_1}}\otimes \varepsilon^{j_{1}}\otimes ... \otimes\varepsilon^{j_{p_2}}$$
 	where $(\partial_{k_1}\otimes ... \otimes\partial_{k_q}
 	\otimes dx^{i_1}\otimes...\otimes dx^{i_{p_1}}\otimes \varepsilon^{j_{1}}\otimes ... \otimes\varepsilon^{j_{p_2}})
 	_{k\in\{1,...,n\}^q, i\in\{1,...,n\}^{p_1}, j\in\{1,...,n\}^{p_2}}$ is a basis section of this tensor 
 	and, the $\partial_{k_r}:=\frac{\partial}{\partial x^{k_r}}$ as well as $\varepsilon^{j_{s}}$
 	are respectively the basis sections for $\pi^*TM$ and $T^*\mathring{T}M$ dual of $T\mathring{T}M$.
 \end{rem}

  \begin{exe}
   \begin{itemize}
   \item [(1)]A vector field $X$ on $\mathring{T}M$ is of type $(0,0;0,1)$.
   \item [(2)]A section $\xi$ of $\pi^*TM$ is of type $(0,0;1,0)$.
   \item [(3)]The fundamental tensor $g$ is of type $(0,0;2,0)$.
   %\item [(4)] The Cartan tensor $\mathcal{A}$ is of type $(0,0;3,0)$.
   \end{itemize}
  \end{exe}
The following lemma defines the Chern connection on $\pi^*TM$.
\begin{lemma}\cite{biMbatakou2015}\label{lem1}
  Let $(M,F)$ be a Finslerian manifold and $g$ its fundamental tensor. 
 There exists a unique linear connection $\nabla$ 
 on the vector bundle $\pi^*TM$ such that, for all 
 $X,Y\in \chi(\mathring{T}M)$ and for every $\xi, \eta\in\Gamma(\pi^*TM)$, one has the following properties:
 \begin{itemize}
  \item [(i)]
  $\nabla_X\pi_*Y-\nabla_Y\pi_*X=\pi_*[X,Y],$
  \item [(ii)] 
  $X(g(\xi,\eta))=g(\nabla_X\xi,\eta)+g(\xi,\nabla_X\eta)+2\mathcal{A}(\theta(X),\xi,\eta)$ \\
  where $\mathcal{A}:=\frac{F}{2}\frac{\partial g_{ij}}{\partial y^k}dx^i\otimes dx^j\otimes dx^k$ is the Cartan tensor.
 \end{itemize}
 \end{lemma}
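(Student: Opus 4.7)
The plan is to follow the classical Koszul argument from Riemannian geometry, adapted to the pulled-back bundle setting. A crucial simplification comes from the identity $\theta(X)=0$ for horizontal $X\in\mathcal{H}=\ker\theta$: condition (ii) then reduces to strict metric compatibility along horizontal directions, so the Cartan-tensor correction enters only when the differentiation direction has a vertical component. I would exploit the splitting $T\mathring{T}M=\mathcal{H}\oplus\mathcal{V}$ of \eqref{decomposition} and determine $\nabla$ separately along $\mathcal{H}$ and $\mathcal{V}$.

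For the horizontal part, given three sections $\xi_1,\xi_2,\xi_3$ of $\pi^*TM$, I would apply (ii) to $X=\mathbf{h}(\xi_1)$ acting on $g(\xi_2,\xi_3)$ together with its two cyclic permutations, sum the first two and subtract the third. The $\mathcal{A}$-terms vanish because $\theta(\mathbf{h}(\xi_a))=0$ by Definition~\ref{defi2}(3). Condition (i), applied with $X=\mathbf{h}(\xi_a)$ and $Y=\mathbf{h}(\xi_b)$, allows one to replace the symmetric combinations $\nabla_{\mathbf{h}(\xi_a)}\xi_b+\nabla_{\mathbf{h}(\xi_b)}\xi_a$ by an expression involving $\pi_*[\mathbf{h}(\xi_a),\mathbf{h}(\xi_b)]$ (the Lie bracket of two horizontal lifts lies, up to the bracket of their base parts, in the vertical subbundle, so its $\pi_*$-image is explicit). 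After this cancellation one isolates $2g(\nabla_{\mathbf{h}(\xi_1)}\xi_2,\xi_3)$ as a combination of horizontal derivatives of $g$ and of $\pi_*$-brackets; positive-definiteness of $g$ then forces uniqueness of $\nabla_{\mathbf{h}(\xi)}$.

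For the vertical part, condition (i) with $X=\mathbf{v}(\zeta)$ is vacuous since $\pi_*\mathbf{v}(\zeta)=0$, so $\nabla_{\mathbf{v}(\zeta)}$ is read directly off (ii). A short computation using \eqref{1} and the definition of $\mathcal{A}$ shows that the unique prescription compatible with (ii) is the naive componentwise derivative $\nabla_{\mathbf{v}(\zeta)}(\xi^i\partial_i)=\mathbf{v}(\zeta)(\xi^i)\,\partial_i$: the $\mathbf{v}(\zeta)$-derivative of $g_{ij}$ gets absorbed exactly into the Cartan correction $2\mathcal{A}(\zeta,\xi,\eta)$ by the very formula defining $\mathcal{A}$. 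Existence then follows by taking these two prescriptions as the \emph{definition} of $\nabla$, extending by linearity over the splitting $\mathcal{H}\oplus\mathcal{V}$, and checking (i)--(ii) by construction. The main obstacle, in my view, is bookkeeping: one must track horizontal versus vertical brackets carefully, verify that the resulting Christoffel-like coefficients $\Gamma^k_{ij}(x,y)$ transform correctly under coordinate changes on $M$ (which is nontrivial because they depend on the nonlinear connection coefficients $N^i_j$ governing $\mathcal{H}$), and confirm that the whole construction is intrinsic, i.e.\ independent of the local frame and compatible with the canonical Ehresmann--Finsler connection recalled before Definition~\ref{defi2}.
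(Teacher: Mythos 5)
The paper itself offers no proof of this lemma (it is quoted from Djelid's thesis), so I can only judge your argument on its own terms. The horizontal half is sound and standard: since $\theta(\textbf{h}(\xi))=0$, condition (ii) is genuine metric compatibility along $\mathcal{H}$, and the cyclic Koszul sum combined with (i) for pairs of horizontal lifts isolates $2g(\nabla_{\textbf{h}(\xi_1)}\xi_2,\xi_3)$; positive-definiteness of $g$ then gives uniqueness of $\nabla$ along $\mathcal{H}$ and reproduces the coefficients $\Gamma^i_{jk}=\tfrac12 g^{il}(\delta_kg_{jl}+\delta_jg_{lk}-\delta_lg_{jk})$ of the paper. (One caveat you already flag: this presupposes that $\mathcal{H}$, i.e.\ the $N^i_j$, is fixed in advance; in the paper's setup it is, via the canonical Ehresmann--Finsler connection, so no circularity arises.)

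The genuine gap is in the vertical half. You assert that (i) is vacuous for $X=\textbf{v}(\zeta)$ because $\pi_*\textbf{v}(\zeta)=0$, and that (ii) alone forces the naive componentwise derivative. Neither claim is right. A compatibility condition of type (ii) can never single out a connection by itself: if $\nabla_{\textbf{v}(\zeta)}\partial_i=0$ satisfies (ii), so does $\nabla_{\textbf{v}(\zeta)}\partial_i=T_i{}^j\partial_j$ for any endomorphism with $g_{jk}T_i{}^k+g_{ik}T_j{}^k=0$, since the $g$-antisymmetric part cancels in $g(\nabla\xi,\eta)+g(\xi,\nabla\eta)$. So your uniqueness argument fails for the vertical directions. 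What actually pins them down is condition (i) with $X=\textbf{v}(\zeta)$ and $Y$ \emph{not} vertical: since $\pi_*\textbf{v}(\zeta)=0$, (i) reads $\nabla_{\textbf{v}(\zeta)}\pi_*Y=\pi_*[\textbf{v}(\zeta),Y]$, and taking $Y=\partial/\partial x^i$ (or $Y=\textbf{h}(\xi)$) this forces $\nabla_{\textbf{v}(\zeta)}\partial_i=0$, i.e.\ exactly the naive derivative. Your observation that $\textbf{v}(\zeta)(g_{ij})=F\zeta^k\partial g_{ij}/\partial y^k=2\mathcal{A}(\zeta,\partial_i,\partial_j)$ (by total symmetry of $\mathcal{A}$) is correct and important, but its role is to verify that (ii) is then \emph{consistent} in vertical directions, not to derive the vertical part. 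With that reassignment of roles --- (i) determines the vertical part, (ii) is checked against it; (i) and (ii) jointly determine the horizontal part via Koszul --- your construction goes through.
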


 One has
\begin{eqnarray}
 \nabla_{\frac{\delta}{\delta x^j}}\frac{\partial}{\partial x^k}=\Gamma_{jk}^i\frac{\partial}{\partial x^i}, \text{   }
\Gamma_{jk}^i:=\frac{1}{2}g^{il}\left(\frac{\delta g_{jl}}{\delta x^k}+\frac{\delta g_{lk}}{\delta x^j}
 -\frac{\delta g_{jk}}{\delta x^l}\right)\label{8775244}
\end{eqnarray}
where
 \begin{eqnarray}
  \left\{\frac{\delta}{\delta x^i}:=\frac{\partial}{\partial x^i}-N_i^j\frac{\partial}{\partial y^j}
  =\textbf{h}(\frac{\partial}{\partial x^i})\right\}_{i=1,...,n}\text{     with     }
 N_j^i=\Gamma_{jk}^iy^k.\label{562564654}
 \end{eqnarray}
 The generalized Cartan connection on $\pi^*TM$ is given as follows.
\begin{lemma}\cite{biMbatakou2015}\label{lem2}
  Let $(M,F)$ be a Finslerian manifold and $g$ its fundamental tensor. 
 There exists a unique linear connection ${}^c\nabla$ 
 on the vector bundle $\pi^*TM$ such that, for all 
 $X,Y\in \chi(\mathring{T}M)$ and for every $\xi, \eta,\nu\in\Gamma(\pi^*TM)$, one has the following properties:
 \begin{itemize}
  \item [(i)] 
   ${}^c\nabla_X\pi_*Y-{}^c\nabla_Y\pi_*X=\pi_*[X,Y]
  +\left(\mathcal{A}(\theta(X),\pi_*Y,\bullet)\right)^\sharp-\left(\mathcal{A}(\pi_*X,\theta(Y),\bullet)\right)^\sharp,$
   \item [(ii)] $X(g(\xi,\eta))=g({}^c\nabla_X\xi,\eta)+g(\xi,{}^c\nabla_X\eta)$ 
  where $\mathcal{A}$ is the Cartan tensor and $(~~~)^{\sharp}$ the section of $\pi^*TM$ dual to $\mathcal{A}$ defined by 
  $g\left(\mathcal{A}(\xi,\eta,\bullet\right)^\sharp,\nu)=\mathcal{A}(\xi,\eta,\nu)$.
 \end{itemize}
 \end{lemma}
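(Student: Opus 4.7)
The plan is to piggyback on Lemma \ref{lem1}: I will build ${}^c\nabla$ as a tensorial modification of the Chern connection $\nabla$, and exploit the total symmetry of the Cartan tensor $\mathcal{A}$ — manifest from its definition as a $y$-derivative of the symmetric fundamental tensor $g$ — to cancel Chern's metric defect in property (ii) of Lemma \ref{lem1} while producing the prescribed ``torsion'' required by condition (i) of the present lemma.

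For existence I would take the ansatz
\[
{}^c\nabla_X\xi := \nabla_X\xi + A(X,\xi),\qquad A(X,\xi):=\bigl(\mathcal{A}(\theta(X),\xi,\bullet)\bigr)^{\sharp},
\]
which is automatically $C^{\infty}(\mathring{T}M,\mathbb{R})$-bilinear because $\theta$, $\mathcal{A}$ and the $\sharp$-isomorphism are tensorial. By total symmetry of $\mathcal{A}$,
\[
g(A(X,\xi),\eta)+g(\xi,A(X,\eta))=\mathcal{A}(\theta(X),\xi,\eta)+\mathcal{A}(\theta(X),\eta,\xi)=2\mathcal{A}(\theta(X),\xi,\eta),
\]
which, added to the Chern identity Lemma \ref{lem1}(ii), yields exactly the metric compatibility (ii) for ${}^c\nabla$. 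Subtracting the $A$-identity for $(Y,X)$ from that for $(X,Y)$ and invoking Lemma \ref{lem1}(i) gives
\[
{}^c\nabla_X\pi_*Y-{}^c\nabla_Y\pi_*X=\pi_*[X,Y]+\bigl(\mathcal{A}(\theta(X),\pi_*Y,\bullet)\bigr)^{\sharp}-\bigl(\mathcal{A}(\theta(Y),\pi_*X,\bullet)\bigr)^{\sharp},
\]
and the last term equals $(\mathcal{A}(\pi_*X,\theta(Y),\bullet))^{\sharp}$ by symmetry of $\mathcal{A}$, so condition (i) is satisfied.

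For uniqueness I would mimic the Koszul argument used for Lemma \ref{lem1}. Given any ${}^c\nabla$ satisfying (i) and (ii), apply (ii) cyclically to the three triples $(X,\pi_*Y,\pi_*Z)$, $(Y,\pi_*Z,\pi_*X)$, $(Z,\pi_*X,\pi_*Y)$, form the combination \emph{first $+$ second $-$ third}, and then use (i) to eliminate ${}^c\nabla_Y\pi_*X$, ${}^c\nabla_Z\pi_*Y$ and ${}^c\nabla_Z\pi_*X$ in favour of ${}^c\nabla_X\pi_*Y$, ${}^c\nabla_Y\pi_*Z$ and ${}^c\nabla_X\pi_*Z$. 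The outcome is a Koszul-type identity
\[
2g({}^c\nabla_X\pi_*Y,\pi_*Z)=X\bigl(g(\pi_*Y,\pi_*Z)\bigr)+Y\bigl(g(\pi_*Z,\pi_*X)\bigr)-Z\bigl(g(\pi_*X,\pi_*Y)\bigr)+\Lambda(X,Y,Z),
\]
in which $\Lambda(X,Y,Z)$ is a fully explicit combination of Lie brackets $\pi_*[\,\cdot\,,\,\cdot\,]$ and values of $\mathcal{A}$ on $\theta(\cdot)$ and $\pi_*(\cdot)$. Non-degeneracy of $g$ then pins ${}^c\nabla_X\pi_*Y$ down, and the extension to an arbitrary $\xi\in\Gamma(\pi^*TM)$ is automatic because locally $\xi=\pi_*(\textbf{h}(\xi))$ by the horizontal lift of Definition \ref{defi2}, and both sides of the identity are $C^{\infty}(\mathring{T}M,\mathbb{R})$-linear in the corresponding argument.

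I expect the main obstacle to be purely combinatorial: the Koszul combination picks up six Cartan contributions (two per cyclic application of (ii), after one substitution of (i) per swap), and total symmetry of $\mathcal{A}$ must be invoked at exactly the right places so that two of them cancel and the remaining four collapse into the Cartan-$\sharp$ term predicted by the existence construction. A related subtlety, specific to the pulled-back bundle setting, is that conditions (i) and (ii) mix arguments from $T\mathring{T}M$ with arguments from $\pi^*TM$, so one must be careful about which slot of $\mathcal{A}$ is contracted against $\theta(\cdot)$ and which against $\pi_*(\cdot)$; the horizontal/vertical decomposition \eqref{decomposition} is the natural bookkeeping tool for these checks.
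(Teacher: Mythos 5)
Your proposal is correct. Note that the paper itself gives no proof of this lemma: it is quoted verbatim from \cite{biMbatakou2015}, so there is nothing internal to compare against. Your route is the standard one and it checks out: setting ${}^c\nabla_X\xi=\nabla_X\xi+\bigl(\mathcal{A}(\theta(X),\xi,\bullet)\bigr)^{\sharp}$, the total symmetry of $\mathcal{A}$ makes the correction term contribute exactly $+2\mathcal{A}(\theta(X),\xi,\eta)$ to the metric identity, cancelling the defect in Lemma \ref{lem1}(ii), while antisymmetrising in $X,Y$ reproduces the prescribed torsion of condition (i) (using $\mathcal{A}(\theta(Y),\pi_*X,\bullet)=\mathcal{A}(\pi_*X,\theta(Y),\bullet)$); the Koszul combination then gives uniqueness, with the extension from $\pi_*Y$ to arbitrary $\xi\in\Gamma(\pi^*TM)$ handled via $\xi=\pi_*(\textbf{h}(\xi))$ exactly as you indicate.
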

\section{Finslerian $R$-Einstein metrics}\label{Section3}
\subsection{First curvature $R$ associated with the Chern connection or the Cartan connection}
\begin{definition}
 The full curvature of a linear connection $\nabla$
 on the vector bundle $\pi^*TM$ over the manifold $\mathring{T}M$ is the application
$$ \phi:\begin{matrix}
 \chi(\mathring{T}M)\times\chi(\mathring{T}M)\times\Gamma(\pi^*TM)&\to&\Gamma(\pi^*TM)\\
 (X,Y,\xi)&\mapsto
 &\phi(X,Y)\xi=\nabla_X\nabla_Y\xi-\nabla_Y\nabla_X\xi-\nabla_{[X,Y]}\xi.
 \end{matrix}$$
 \end{definition}
 By the relation (\ref{decomposition}), we have
  \begin{eqnarray}
   \nabla_X=\nabla_{\hat{X}}+\nabla_{\check{X}},\nonumber
  \end{eqnarray}
where $X=\hat{X}+\check{X}$ with $\hat{X}\in\Gamma(\mathcal{H})$ and $\check{X}\in\Gamma(\mathcal{V})$.

 Using the metric $F$, one can define the full curvature  
 of $\nabla$ as:
\begin{eqnarray}
 \Phi(\xi,\eta,X,Y)&=&g(\phi(X,Y)\xi,\eta)\nonumber\\
                   &=&g(\phi(\hat{X},\hat{Y})\xi+\phi(\hat{X},\check{Y})\xi
                   +\phi(\check{X},\hat{Y})\xi+\phi(\check{X},\check{Y})\xi,\eta)\nonumber\\
                   &=&\textbf{R}(\xi,\eta,X,Y)+\textbf{P}(\xi,\eta,X,Y)+\textbf{Q}(\xi,\eta,X,Y),\nonumber
\end{eqnarray}
where 
  $
      \textbf{R}(\xi,\eta,X,Y)=g(\phi(\hat{X},\hat{Y})\xi,\eta),$ 
   $\textbf{P}(\xi,\eta,X,Y)=g(\phi(\hat{X},\check{Y})\xi,\eta)
                   +g(\phi(\check{X},\hat{Y})\xi,\eta)$ and $\textbf{Q}(\xi,\eta,X,Y)=g(\phi(\check{X},\check{Y})\xi,\eta)$
                   are respectively the \textit{first (horizontal) curvature}, 
\textit{mixed curvature} and \textit{vertical} curvature.

In particular, if $\nabla$ is the Chern connection, the $\textbf{Q}$-curvature vanishes.
\begin{pro}\label{pro01b}
 Let ${}^c\Phi$ be the full curvature tensor associated with the Cartan connection and $\Phi$ be the full curvature tensor associated with the Chern connection. Then in the horizontal direction,
 ${}^c\Phi=\Phi$.
 \end{pro}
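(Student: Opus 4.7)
The approach I would take is to study the difference tensor $D_X\xi := {}^c\nabla_X\xi - \nabla_X\xi$, which is $C^\infty(\mathring{T}M,\mathbb{R})$-linear in both $X$ and $\xi$ (so it is a genuine tensor on $\pi^*TM$). The structural observation driving everything is that every Cartan-specific correction appearing in Lemmas \ref{lem1} and \ref{lem2} is proportional to $\theta(X)$ or $\theta(Y)$, and by the definition $\mathcal H=\ker\theta$ these corrections vanish on horizontal vector fields. So when we restrict to horizontal arguments, both connections obey formally identical Koszul-type axioms.

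The first step is to prove $D_{\hat X}\equiv 0$ for every horizontal $\hat X$. Subtracting Lemma \ref{lem1}(ii) from Lemma \ref{lem2}(ii) and imposing $\theta(\hat X)=0$ yields the skew-symmetry
\[
g(D_{\hat X}\xi,\eta)+g(\xi,D_{\hat X}\eta)=0.
\]
Subtracting the two torsion identities (i) and imposing $\theta(\hat X)=\theta(\hat Y)=0$ yields the horizontal symmetry $D_{\hat X}\pi_*\hat Y = D_{\hat Y}\pi_*\hat X$. Setting $T(\hat X,\hat Y,\hat Z):=g(D_{\hat X}\pi_*\hat Y,\pi_*\hat Z)$, the usual cyclic Koszul permutation (three skew moves alternated with three symmetry moves) forces $T\equiv 0$. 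Since $\pi_*$ restricted to $\mathcal H$ is a fibrewise isomorphism onto $\pi^*TM$ and $g$ is non-degenerate, this means $D_{\hat X}=0$ on every section, i.e.\ ${}^c\nabla_{\hat X}=\nabla_{\hat X}$ identically in the horizontal direction.

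The second step plugs this into the definition of the full curvature. For horizontal $\hat X,\hat Y$,
\[
{}^c\phi(\hat X,\hat Y)\xi = {}^c\nabla_{\hat X}{}^c\nabla_{\hat Y}\xi - {}^c\nabla_{\hat Y}{}^c\nabla_{\hat X}\xi - {}^c\nabla_{[\hat X,\hat Y]}\xi.
\]
The first two terms coincide with $\nabla_{\hat X}\nabla_{\hat Y}\xi - \nabla_{\hat Y}\nabla_{\hat X}\xi$ by the first step (applied both to the outer $\hat X, \hat Y$ and to the inner sections $\nabla_{\hat Y}\xi,\nabla_{\hat X}\xi$ of $\pi^*TM$), so they reproduce exactly the corresponding Chern terms. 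Paired with $g(\cdot,\eta)$ this gives ${}^c\textbf{R}=\textbf{R}$ on the $hh$-block, which is precisely the content of the proposition.

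The main obstacle will be the remaining bracket term $\nabla_{[\hat X,\hat Y]}\xi$: since $[\hat X,\hat Y]$ in general has a non-trivial vertical component (the curvature of the nonlinear Ehresmann connection), and since ${}^c\nabla_{\check Z}$ differs from $\nabla_{\check Z}$ by Cartan-tensor terms on vertical $\check Z$, one must verify that the vertical piece of the bracket does not pollute the horizontal curvature. I would handle this by reducing to the local coordinate expression
\[
R^i{}_{jkl}=\frac{\delta\Gamma^i_{jl}}{\delta x^k}-\frac{\delta\Gamma^i_{jk}}{\delta x^l}+\Gamma^i_{mk}\Gamma^m_{jl}-\Gamma^i_{ml}\Gamma^m_{jk},
\]
which depends only on the horizontal Christoffel symbols $\Gamma^i_{jk}$ and the horizontal frame $\delta/\delta x^j$; since these are common to both connections (by the first step they share the same $\omega^i_j$-component along $dx^k$), the $hh$-curvature coefficients coincide, the vertical bracket contribution being absorbed into the $\textbf{P}$ and $\textbf{Q}$ blocks of the splitting $\Phi=\textbf{R}+\textbf{P}+\textbf{Q}$.
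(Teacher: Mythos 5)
Your first step is sound and is essentially the paper's own argument made explicit: the paper computes $\theta(\hat X)=0$ for horizontal $\hat X$ and invokes the uniqueness clauses of Lemma \ref{lem1} and Lemma \ref{lem2} to conclude ${}^c\nabla_{\hat X}=\nabla_{\hat X}$; your difference tensor $D$ together with the skew/symmetric Koszul permutation is a correct and more careful way of extracting the same conclusion.

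The gap is in your last step, at exactly the point you yourself flag as the main obstacle. The vertical part of $[\hat X,\hat Y]$ cannot be ``absorbed into the $\textbf{P}$ and $\textbf{Q}$ blocks'': in this paper the splitting $\Phi=\textbf{R}+\textbf{P}+\textbf{Q}$ is defined by the horizontal/vertical type of the \emph{arguments} $X,Y$, so the entire expression $\phi(\hat X,\hat Y)\xi$, including the term $-\nabla_{[\hat X,\hat Y]}\xi$ with its vertical bracket component, belongs to $\textbf{R}$ by definition. Since the two connections differ on vertical directions by
\[
{}^c\nabla_{\check Z}\xi-\nabla_{\check Z}\xi=\bigl(\mathcal{A}(\theta(\check Z),\xi,\bullet)\bigr)^{\sharp},
\]
and $[\hat X,\hat Y]$ is generically not horizontal, one obtains
\[
{}^c\phi(\hat X,\hat Y)\xi-\phi(\hat X,\hat Y)\xi=-\bigl(\mathcal{A}(\theta([\hat X,\hat Y]),\xi,\bullet)\bigr)^{\sharp},
\]
which in coordinates is the classical discrepancy between the Cartan and Chern $hh$-curvatures, namely the Cartan tensor contracted with the curvature $R^{m}{}_{jk}=\delta_{j}N^{m}_{k}-\delta_{k}N^{m}_{j}$ of the Ehresmann connection. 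This does not vanish for a general Finsler metric; it disappears only when $\mathcal{A}=0$, when the Ehresmann curvature vanishes, or after contracting one slot with the distinguished section $l$ (because $\mathcal{A}(l,\cdot,\cdot)=0$), which is what makes $\mathcal{R}ic$ and $\textbf{E}_F^H$ connection-independent. Your appeal to the local formula $R^{i}{}_{jkl}=\delta\Gamma^{i}_{jl}/\delta x^{k}-\cdots$ is circular here: that expression \emph{is} the Chern $hh$-curvature, and the assertion that the Cartan curvature admits the same coordinate expression is precisely what has to be proved. To be fair, the paper's own proof passes from ``${}^c\nabla=\nabla$ on horizontal fields'' directly to ``the curvatures are equal'' without ever examining the bracket term, so you have correctly located the weak joint of the argument rather than invented a new difficulty; but as written your resolution does not close it.
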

 \begin{proof}If $X,Y\in \mathcal{H}$ then  
 $X=\hat{X}=\hat{X}^k\frac{\delta}{\delta x^k}$ and $Y=\hat{Y}=\hat{Y}^r\frac{\delta}{\delta x^r}$. By the relation (\ref{03b}), we get
 \begin{eqnarray}
  \theta(\hat{X})&=&
  \left[\frac{\partial}{\partial x^i}\otimes \frac{1}{F}(dy^i+N_j^idx^j)\right](\hat{X}^k\frac{\delta}{\delta x^k})\nonumber\\
  &=&-\frac{\hat{X}^k}{F}N_k^s\delta_s^i\frac{\partial}{\partial x^i}
  +\frac{\hat{X}^k}{F}N_j^i\delta_k^j\frac{\partial}{\partial x^i}\nonumber\\
  &=&0.\label{kjs}
 \end{eqnarray}
Using the relation (\ref{kjs}), the both connections (${}^c\nabla$ and $\nabla$) verify the equations $(i)$ and $(ii)$ in the Lemma \ref{lem1} and the Lemma \ref{lem2}. That is, for horizontal vectors fields on $\mathring{T}M$, ${}^c\nabla$ and $\nabla$ are torsion-free and are compatible with respect to the Finslerian metric $F$. Thus, from the Lemma \ref{lem1} and Lemma \ref{lem2}, ${}^c\nabla=\nabla$.
 \end{proof}
\subsection{$R$-Einstein metric}
With respect to the Chern connection or the Cartan connection, we have the following.
\begin{definition}\label{defi4b}
   The horizontal Ricci tensor $\textbf{Ric}_F^H$ and the horizontal scalar curvature $\textbf{Scal}_F^H$ of $(M,F)$ 
   are respectively defined by
   \begin{eqnarray}
   \textbf{Ric}_F^H(\xi,X)&:=&\Phi\Big(\xi,\frac{\partial}{\partial x^i},X,\textbf{h}(\frac{\partial}{\partial x^j})\Big)g^{ij},\label{03as}\\
   \textbf{Scal}_F^H&:=&trace_{\underline{g}}\Big(\textbf{Ric}_F^H\Big), ~~~\underline{g}:=\pi^*g.\label{Sd1}
    \end{eqnarray}
\end{definition}
\begin{rem}\label{remfdgf}
Let $l:=\frac{y^i}{F}\frac{\partial}{\partial x^i}$ be the distinguish section for $\pi^*TM$.
The tensor $\textbf{Ric}_F^H$ can be expressed in term of the classical Akbar-Zadeh Ricci curvatures \cite{biBing}
 $\mathcal{R}ic$ and $\textbf{Ric}_{ij}$ as follows.
 \begin{eqnarray}\label{003b5b}
  \textbf{Ric}_F^H(l,\textbf{h}(l))
  &\stackrel{(\ref{03as})}{=}&
  g^{ij}\textbf{R}(l,\partial_i,
  \textbf{h}(l),\hat{\partial}_j)\nonumber\\
  &=&g^{ij}l^l\textbf{R}(\partial_l,\partial_i,
  ,\hat{\partial}_k,\hat{\partial}_j)l^k\nonumber\\
  &=&
  \mathcal{R}ic\nonumber\\
  &=&l^il^j\textbf{Ric}_{ij}.\nonumber
 \end{eqnarray}
% \end{itemize}
\end{rem}

It is known \cite{biBao2}, $F$ is Einstein if there exists a $C^{\infty}$ function $k$ on $M$ such that 
\begin{eqnarray}\label{85a}
  \mathcal{R}ic=(n-1)k.
\end{eqnarray}

\begin{rem}\label{lem1001b}
 If $F$ is a Finslerian Einstein metric on an $n$-dimensional manifold $M$ then its associated horizontal scalar curvature 
 is a function on $M$. That is, for any $(x,y)$ of $\mathring{T}M$, $\textbf{Scal}_F^H(x,y)=n(n-1)k(x)$.
\end{rem}
%\begin{proof}
 %By the (\ref{85a}) and the Remark \ref{remfdgf}, we have 
%\begin{eqnarray}
 %(n-1)k&=&\mathcal{R}ic\nonumber\\
  %&=&l^il^j\textbf{Ric}_F^H( \partial_i,\hat{\partial}_j)\nonumber\\
  %&=&\frac{y^i}{F}\frac{y^j}{F}\textbf{Ric}_F^H( \partial_i,\hat{\partial}_j).\nonumber
%\end{eqnarray}
%It follows, from (\ref{03as}) that, $(n-1)k=\frac{1}{n}\textbf{Scal}_F^H.$
%Hence we obtain the Lemma \ref{lem1001b}.
%\end{proof}
Now, we introduce the following.
\begin{definition}\cite{biNibaruta2}\label{defi10} A Finslerian metric $F$ on an $n$-dimensional manifold is
$R$-Einstein if
  \begin{eqnarray}
   \textbf{Ric}_F^H=\frac{1}{n}\textbf{Scal}_F^H\underline{g}.\label{EinstC1}
  \end{eqnarray}
  \end{definition}
  \begin{rem}\label{rem10}
 If $F$ satisfies (\ref{EinstC1})
 for a constant function $\textbf{Scal}_F^H$ (respectively for $\textbf{Scal}_F^H\equiv0$) then $F$ is said to be  
   horizontally Ricci-constant (respectively, $F$ is called horizontally Ricci-flat metric).
 \end{rem}
\subsection{Schur's type lemma}
\begin{definition}\label{defi8}
 Let $\textbf{T}$ be a $(0,0;p_1,p_2)$-tensor on $(M,F)$ and $X\in T\mathring{T}M$.
 The covariant derivative of $\textbf{T}$ in the direction of $X$ 
 is given by the following formula:
 \begin{eqnarray}
  \left(\nabla_X\textbf{T}\right)\left(\xi_1,...,\xi_{p_1},X_1,...,X_{p_2}\right)
  &:=&X\left(\textbf{T}\left(\xi_1,...,\xi_{p_1},X_1,...,X_{p_2}\right)\right)\nonumber\\
  &&-\sum_{i=1}^{p_1}\left[\textbf{T}\left(\xi_1,...,\nabla_{X}\xi_i,...,\xi_{p_{1}},X_1,...,X_{p_{2}}\right)\right]\nonumber\\
  &&-\sum_{j=1}^{p_2}\left[\textbf{T}(\xi_1,...,\xi_{p_1},X_1,...,\textbf{h}(\nabla_X\pi_*X_j),...,X_{p_2})\right]\nonumber\\
  &&-\sum_{j=1}^{p_2}\left[\textbf{T}(\xi_1,...,\xi_{p_1},X_1,...,\textbf{h}(\nabla_X\theta(X_j)),...,X_{p_2})\right]\nonumber\\
&&-\sum_{j=1}^{p_2}\left[\textbf{T}(\xi_1,...,\xi_{p_1},X_1,...,\textbf{v}(\nabla_X\pi_*X_j),...,X_{p_2})\right]\nonumber\\
&&-\sum_{j=1}^{p_2}\left[\textbf{T}(\xi_1,...,\xi_{p_1},X_1,...,\textbf{v}(\nabla_X\theta(X_j)),...,X_{p_2})\right].\nonumber\label{0033b}
 \end{eqnarray}
 \end{definition}
 
 We obtain the Finslerian horizantal Bianchi identity given in the following.
\begin{lemma}\label{lem3}
 If $\xi,\eta\in\Gamma(\pi_*TM)$ and $X,Y,Z\in \chi(\mathring{T}M)$ then
 \begin{eqnarray}
  \left(\nabla_Z\textbf{R}\right)(\xi,\eta,X,Y)+  \left(\nabla_X\textbf{R}\right)(\xi,\eta,Y,Z)
  +  \left(\nabla_Y\textbf{R}\right)(\xi,\eta,Z,X)=0.\nonumber \label{00877}
 \end{eqnarray}
\end{lemma}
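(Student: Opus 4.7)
The plan is to establish the identity as a straightforward consequence of Jacobi's identity applied to the curvature operator $\phi$, after first reducing everything to the horizontal bundle. Since $\textbf{R}(\xi,\eta,X,Y)=g(\phi(\hat{X},\hat{Y})\xi,\eta)$ depends only on the horizontal components of $X,Y$, and since Definition \ref{defi8} shows that the covariant derivative $\nabla_Z\textbf{R}$ is tensorial in the vector-field slots, I would first observe that the identity is tensorial in $X,Y,Z$. It therefore suffices to prove it for a convenient choice of vector fields at an arbitrary but fixed point of $\mathring{T}M$.

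Next, I would pick $X,Y,Z$ to be horizontal lifts of coordinate vectors, say $X=\textbf{h}(\partial_i)=\frac{\delta}{\delta x^i}$, $Y=\textbf{h}(\partial_j)$, $Z=\textbf{h}(\partial_k)$, so that the vertical parts vanish and many of the correction terms in Definition \ref{defi8} simplify. With this choice, the computation of $(\nabla_Z\textbf{R})(\xi,\eta,X,Y)$ reduces to an expression of the form
\begin{eqnarray}
g\Bigl(\bigl(\nabla_{\hat{Z}}\phi(\hat{X},\hat{Y})-\phi(\nabla_{\hat{Z}}\hat{X},\hat{Y})-\phi(\hat{X},\nabla_{\hat{Z}}\hat{Y})\bigr)\xi,\eta\Bigr) \nonumber
\end{eqnarray}
plus metric-compatibility terms which, by Lemma \ref{lem1} (or Lemma \ref{lem2} for the Cartan connection), involve only Cartan-tensor corrections in directions that vanish on horizontal vectors by the calculation (\ref{kjs}). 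Thus the horizontal-horizontal part of both connections agrees, and the $g$ arguments pass through the cyclic sum cleanly.

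The heart of the proof is then the operator identity
\begin{eqnarray}
\sum_{\text{cyc}(X,Y,Z)}\Bigl(\nabla_{\hat{Z}}\bigl[\nabla_{\hat{X}},\nabla_{\hat{Y}}\bigr]-\bigl[\nabla_{\hat{Z}},\nabla_{[\hat{X},\hat{Y}]}\bigr]-\nabla_{[\hat{Z},[\hat{X},\hat{Y}]]}\Bigr)=0, \nonumber
\end{eqnarray}
which is exactly the Jacobi identity applied to the three operators $\nabla_{\hat{X}},\nabla_{\hat{Y}},\nabla_{\hat{Z}}$ together with the Jacobi identity for the Lie bracket $[\hat{X},[\hat{Y},\hat{Z}]]+[\hat{Y},[\hat{Z},\hat{X}]]+[\hat{Z},[\hat{X},\hat{Y}]]=0$. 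Cyclically summing the expansion of $\nabla_Z\textbf{R}$ produces precisely this combination, so the sum vanishes.

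The main obstacle I anticipate is bookkeeping rather than conceptual: Definition \ref{defi8} contains six correction terms (two each for $\pi_*$- and $\theta$-pieces, horizontally and vertically lifted), and one must verify that, under the cyclic sum over $X,Y,Z$, all the mixed contributions pair up and cancel. The crucial ingredients for this cancellation are (i) the vanishing (\ref{kjs}) of $\theta$ on horizontal fields, which kills the $\theta$-correction terms on our chosen frame, and (ii) the fact that, by the horizontal torsion-freeness clause (i) of Lemmas \ref{lem1} and \ref{lem2}, $\nabla_{\hat{X}}\pi_*\hat{Y}-\nabla_{\hat{Y}}\pi_*\hat{X}=\pi_*[\hat{X},\hat{Y}]$, which is exactly what is needed to convert the symmetric $\nabla\pi_*$ corrections into the bracket term used in the Jacobi identity.
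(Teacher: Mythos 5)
Your proposal follows essentially the same route as the paper, whose (one-line) proof likewise derives the identity from the Jacobi identity, the symmetry (torsion-freeness) of $\nabla$, and Definition \ref{defi8} applied to the first curvature $\textbf{R}$; you are simply spelling out the bookkeeping that the paper leaves implicit. The only blemish is notational: in your displayed operator identity the first term should be the commutator $\bigl[\nabla_{\hat{Z}},[\nabla_{\hat{X}},\nabla_{\hat{Y}}]\bigr]$ rather than the composition $\nabla_{\hat{Z}}[\nabla_{\hat{X}},\nabla_{\hat{Y}}]$, since the cyclic sum of the compositions alone does not vanish, whereas the cyclic sum of the commutators is exactly the Jacobi identity you invoke.
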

\begin{proof}
 The Lemma \ref{lem3} is obtained from the symmetry of $\nabla$ and the Jacobi identity and by the Definition \ref{defi8} applied 
 to the first curvature $\textbf{R}$.
\end{proof}

 We prove a Schur lemma for $\textbf{Scal}_F^H$. 
\begin{lemma}\label{proShu}
   If $F$ is horizontally an Einstein metric on a connected manifold of dimension $n\geq3$ then its horizontal scalar curvature 
   is constant.
  \end{lemma}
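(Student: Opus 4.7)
The plan is to mimic the classical Riemannian Schur lemma argument, using the horizontal Bianchi identity from Lemma \ref{lem3} together with the metric compatibility that is available in horizontal directions. By Proposition \ref{pro01b} the first curvature $\textbf{R}$ is the same whether we use the Chern or the Cartan connection, so it is enough to argue with either. Moreover, for a horizontal field $X$ we have $\theta(X)=0$ by (\ref{kjs}), so the Cartan-tensor correction $2\mathcal{A}(\theta(X),\xi,\eta)$ in Lemma \ref{lem1}(ii) drops out and the Chern connection becomes fully metric compatible along $\mathcal{H}$. This is the fact that makes the Riemannian computation go through verbatim.

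First, I would write Lemma \ref{lem3} in local horizontal coordinates as
\begin{eqnarray}
\nabla_m R_{ijkl}+\nabla_k R_{ijlm}+\nabla_l R_{ijmk}=0, \nonumber
\end{eqnarray}
contract with $g^{im}$ (permitted thanks to horizontal metric compatibility) to produce the horizontal Ricci tensor in the first term, and then contract once more with $g^{jl}$. Using the symmetries of $\textbf{R}$ and the definition (\ref{03as}) of $\textbf{Ric}_F^H$ and (\ref{Sd1}) of $\textbf{Scal}_F^H$, this yields the twice-contracted horizontal Bianchi identity
\begin{eqnarray}
2\,\nabla^{i}\textbf{Ric}_{ij}=\nabla_{j}\textbf{Scal}_F^H. \nonumber
\end{eqnarray}

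Second, I would take the horizontal divergence of the Einstein condition (\ref{EinstC1}). By Lemma \ref{lem1001b}, $\textbf{Scal}_F^H$ is in fact a function on $M$ and not merely on $\mathring{T}M$, so its horizontal covariant derivative is just the pullback of its differential on $M$. Applying $\nabla^{i}$ to $\textbf{Ric}_{ij}=\tfrac{1}{n}\textbf{Scal}_F^H\,\underline{g}_{ij}$ and using metric compatibility of $\underline{g}$ in horizontal directions gives
\begin{eqnarray}
\nabla^{i}\textbf{Ric}_{ij}=\tfrac{1}{n}\nabla_{j}\textbf{Scal}_F^H. \nonumber
\end{eqnarray}
Combining the two displays produces $\bigl(\tfrac{1}{2}-\tfrac{1}{n}\bigr)\nabla_{j}\textbf{Scal}_F^H=0$, i.e.\ $\tfrac{n-2}{2n}\nabla_{j}\textbf{Scal}_F^H=0$. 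For $n\geq 3$ the coefficient is nonzero, so the horizontal gradient of $\textbf{Scal}_F^H$ vanishes; since $\textbf{Scal}_F^H$ is a function on the connected manifold $M$, its differential is zero, hence it is constant.

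The main obstacle I expect is book-keeping: carefully justifying that each contraction with $g^{ij}$ commutes with the horizontal covariant derivative, i.e.\ verifying that no mixed or vertical curvature terms and no Cartan-tensor corrections creep in when one contracts the Bianchi identity. The key point, and the place where I would be most careful, is that all indices being contracted are raised and lowered by $\underline{g}$ and all derivatives are taken along horizontal vector fields, so by (\ref{kjs}) and Lemma \ref{lem2}(ii) (or Lemma \ref{lem1}(ii) with $\theta(X)=0$) the connection is metric in the precise sense needed for the standard argument. Once this is in place, the Riemannian proof transcribes without further change.
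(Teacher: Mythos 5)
Your proof is correct and follows essentially the same route as the paper: take the horizontal divergence of the Einstein condition, compare it with the twice-contracted horizontal Bianchi identity from Lemma \ref{lem3} to get $\bigl(\tfrac{1}{2}-\tfrac{1}{n}\bigr)\nabla_j\textbf{Scal}_F^H=0$, and invoke Lemma \ref{lem1001b} plus connectedness to conclude. Your extra care about horizontal metric compatibility (via $\theta(X)=0$ for horizontal $X$) is a point the paper uses implicitly without comment, but it does not change the argument.
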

\begin{proof}
If $F$ is horizontally an Einstein metric then the relation (\ref{EinstC1}) holds.

Applying the horizontal covariant derivative on each side of the relation (\ref{EinstC1}), we obtain
 \begin{eqnarray}
  \nabla_k\textbf{Ric}_F^H(\partial_i,\hat{\partial}_j)
  &=&\frac{1}{n}\big(\nabla_k\textbf{Scal}_F^H\big)g_{ij}.\nonumber
 \end{eqnarray}
Multiplying this last equation by $g^{ik}$ we get
\begin{eqnarray}
 \nabla^i\textbf{Ric}_F^H(\partial_i,\hat{\partial}_j)
&=&\frac{1}{n}\nabla_j\textbf{Scal}_F^H.\label{jahgjfh}
\end{eqnarray}
where $\nabla^i:=g^{ik}\nabla_k$.

By contracting twice on equation (\ref{lem3}) written  in a local coordinate, we have
\begin{eqnarray} 
 \frac{1}{2}\nabla_j\textbf{Scal}_F^H&=&\nabla^i\textbf{Ric}_F^H(\partial_i,\hat{\partial}_j)\nonumber\\
 &\stackrel{(\ref{jahgjfh})}{=}&\frac{1}{n}\nabla_j\textbf{Scal}_F^H.\label{oiuyyugy}
\end{eqnarray}

When $n>2$, the equations (\ref{jahgjfh}) and (\ref{oiuyyugy}) together with the Lemma \ref{lem1001b} imply 
\begin{eqnarray} 
0&=&\nabla_j\textbf{Scal}_F^H\nonumber\\
&=&\frac{\partial \textbf{Scal}_F^H}{\partial x^j}.\nonumber
\end{eqnarray}
Hence, $\textbf{Scal}_F^H$ must be constant.
\end{proof}
 \section{Finslerian locally conformal $R$-Einstein equation}\label{Section4}
 \begin{definition}
A Finslerian metric $F$ on a manifold $M$ is
 locally conformally $R$-Einstein if each point $x\in M$ has a 
neighborhood $U$ on which there exists a $C^{\infty}$-function $u$ such that the conformal deformation $\widetilde{F}$
of $F$, with $\widetilde{F}=e^u F$, is an $R$-Einstein metric on $U$.
\end{definition}
 \begin{lemma}\label{lemgrosb}\cite{biNibaruta1}
  Let $F$ and $\widetilde{F}$ be two Finslerian metrics on an $n$-dimensional manifold $M$. 
  If $F$ is conformal to $\widetilde{F}$, with $\widetilde{F}=e^u F$,
 then the trace-free horizontal Ricci tensors $\textbf{E}_F^H$ and $\widetilde{\textbf{E}}_{\widetilde{F}}^H$ 
   are related by
\begin{eqnarray}\label{122b}
\widetilde{\textbf{E}}_{\widetilde{F}}^H 
&=& \textbf{E}_F^H
 -(n-2)\left(H_u-du\circ du\right)
-\frac{(n-2)}{n}\left(\Delta^Hu+||\triangledown u||_g^2\right)\underline{g}
+{\varPsi}_u^{\textbf{E}_F^H}
\end{eqnarray}
where ${\varPsi}_u^{\textbf{E}_F^H}$ is the $(0,0;1,1)$-tensor on $(M,F)$ given by
\begin{eqnarray}\label{123b}
{\varPsi}_u^{\textbf{E}_F^H}(\xi,X)
&:=&
(2-n)\left[\mathcal{A}(\triangledown u,\mathcal{B}(X),\xi)
 +\mathcal{A}(\triangledown u,\pi_*X,\mathcal{B}(\textbf{h}(\xi)))\right]\nonumber\\
  &&+(n-4)\mathcal{A}(\mathcal{B}(\textbf{h}(\triangledown u),\pi_*X,\xi))\nonumber\\
  &&
  +\frac{1}{n}g^{ij}\left[2(n-2)\mathcal{A}(\triangledown u,\partial_i,\mathcal{B}(\hat{\partial}_j)))\right.\nonumber\\
  &&\left.-3\mathcal{A}(\mathcal{B}(\textbf{h}(\triangledown u),\partial_j,\partial_i))\right]g(\xi,\pi_*X).\nonumber\\
    &&+g^{ij}\Big[g\left(\Theta(X,\textbf{h}(\Theta(\hat{\partial}_j,\textbf{h}(\xi)))),\partial_i\right)
  -g\left(\Theta(\hat{\partial}_j,\textbf{h}(\Theta(X,\textbf{h}(\xi))),\partial_i\right)\Big]\nonumber\\
  &&+g^{ij}\left[g\left((\nabla_X\Theta) (\hat{\partial}_j,\textbf{h}(\xi)),\partial_i\right)
  -g\left((\nabla_j\Theta) (\textbf{h}(\xi),X),\partial_i\right)\right]\nonumber\\
  &&
-\frac{1}{n}g^{ij}g^{kl}\left[
  \mathcal{A}(\mathcal{B}(\textbf{h}(\Theta_{jk}),\partial_l,\partial_i))
-\mathcal{A}(\mathcal{B}(\textbf{h}(\Theta_{kl}),\partial_j,\partial_i))
  \right]g(\xi,\pi_*X)\nonumber\\
  &&-\frac{1}{n}g^{ij}g^{kl}\left[g\left((\nabla_l\Theta)_{jk},\partial_i\right)
  -g\left((\nabla_j\Theta)_{kl},\partial_i\right)\right]g(\xi,\pi_*X),
\end{eqnarray}
 for every $\xi\in\Gamma(\pi^*TM)$ and $X\in\chi(\mathring{T}M)$ with $\Theta_{ij}=\Theta(\hat{\partial}_i,\hat{\partial}_j)$ 
 and 
$\mathcal{B}$ is the application which maps $\pi^*TM$ to $\pi^*TM$ defined by 
\begin{eqnarray}\label{djgdhdgkgd1}
 \mathcal{B}=\mathcal{B}_j^i\partial_i\otimes dx^j
\end{eqnarray}
with 
 \begin{eqnarray}
\mathcal{B}_j^i=\frac{1}{2F}\left(\nabla_ru\right)\frac{\partial(F^2g^{ir}-2y^iy^r)}{\partial y^j}.\label{degjeh8b}
 \end{eqnarray}
 \end{lemma}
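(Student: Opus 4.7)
The plan is to derive the conformal transformation formula for $\textbf{E}^H$ by successively transforming the Chern (or Cartan) connection, the first curvature $\textbf{R}$, the horizontal Ricci tensor, the horizontal scalar curvature, and finally assembling them into the trace-free combination. Since $\tilde{g}_{ij} = e^{2u} g_{ij}$ still depends on $y$ through $g_{ij}$, I expect the standard Riemannian terms to appear together with genuinely Finslerian correction terms involving the Cartan tensor $\mathcal{A}$ and a difference tensor $\Theta := \widetilde{\nabla} - \nabla$.

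First, I would compute $\Theta$. Using Lemma \ref{lem1} (the defining properties of $\nabla$) applied to both $g$ and $\tilde g = e^{2u}g$, I would derive the horizontal and vertical components of $\Theta$ as a $(1,2)$-tensor on $\pi^*TM$. In the Riemannian case this would simply be $\Theta(X,\xi) = du(X)\xi + du(\xi)X - g(X,\xi)\nabla u$; in the Finslerian case the same computation, combined with the identity $\widetilde{\mathcal{A}} = e^{2u}\mathcal{A}$ and the $y$-dependence of $g^{ij}$, produces additional Cartan-type contributions. This is exactly the place where the operator $\mathcal{B}$ in (\ref{djgdhdgkgd1}) enters: $\mathcal{B}$ encodes the "raising an index on $du$" operation corrected by the Finslerian term $-2y^iy^r$ arising from $\partial_y$ of $F^2 g^{ir}$.

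Next, I would plug $\widetilde{\nabla} = \nabla + \Theta$ into the curvature definition to obtain the standard Cartan-style formula
$$\widetilde{\phi}(X,Y)\xi = \phi(X,Y)\xi + (\nabla_X \Theta)(Y,\xi) - (\nabla_Y \Theta)(X,\xi) + \Theta(X,\Theta(Y,\xi)) - \Theta(Y,\Theta(X,\xi)),$$
restrict $X,Y$ to the horizontal subbundle $\mathcal{H}$ to extract $\widetilde{\textbf{R}}$, and then take the $g$-trace using $\widetilde{g}^{ij} = e^{-2u}g^{ij}$ to obtain $\widetilde{\textbf{Ric}}^H$. The principal terms $-(n-2)(H_u - du\circ du)$ come from the symmetrized $\nabla\Theta$ contribution evaluated on horizontal arguments, while the Cartan-corrected trace produces the first two lines of (\ref{123b}). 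A further $\underline{g}$-trace yields $\widetilde{\textbf{Scal}}^H$, from which the $\frac{n-2}{n}(\Delta^H u + \|\nabla u\|_g^2)\underline{g}$ term appears, and the remaining lines of (\ref{123b}) arise from the double-trace of the $\Theta$--$\Theta$ and $\nabla\Theta$ terms.

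Forming $\widetilde{\textbf{E}}^H = \widetilde{\textbf{Ric}}^H - \tfrac{1}{n}\widetilde{\textbf{Scal}}^H \widetilde{\underline g}$ and using $\widetilde{\underline g} = e^{2u}\underline g$ would cancel all conformal weight factors (since $\textbf{E}^H$ is conformally balanced) and leave exactly (\ref{122b}). The main obstacle is bookkeeping: every trace involves $g^{ij}$ which is $y$-dependent, so each application of $\nabla_{\hat\partial_j}$ to a traced expression introduces extra $\mathcal{A}$-terms via Lemma \ref{lem1}(ii), and these must be carefully collected and organized so that the Riemannian-type principal part separates cleanly from the Finslerian correction $\varPsi_u^{\textbf{E}_F^H}$. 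Invariance under the choice between Chern and Cartan connection is then guaranteed by Proposition \ref{pro01b}, since the entire computation of $\textbf{R}$ is performed on horizontal arguments.
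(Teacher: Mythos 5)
The paper never proves this lemma: it is quoted with the citation \cite{biNibaruta1} and used as a black box (the text immediately following it in Section \ref{Section4} is the proof of Proposition \ref{pro1}, not of the lemma), so there is no in-paper argument to compare yours against. That said, your outline --- compute the difference tensor of the two Chern connections, insert it into the standard curvature-difference formula, take the trace twice, and form the trace-free combination --- is the natural route and is consistent with how the companion computations in this paper are organised (see the proof of Lemma \ref{lem11}, where exactly such a decomposition of $\widetilde{\nabla}_X\xi$ is used).

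Two points in your sketch need repair before it becomes a proof. First, you set $\Theta:=\widetilde{\nabla}-\nabla$, but in the conventions used here (visible in the proof of Lemma \ref{lem11}) $\Theta$ denotes only the \emph{residual} Finslerian part of that difference: the authors write $\widetilde{\nabla}_X\xi=\nabla_X\xi+du(\pi_*X)\xi+du(\xi)\pi_*X-g(\pi_*X,\xi)\triangledown u+\Theta(X,\textbf{h}(\xi))$, so the Riemannian-type terms are split off first and only the remainder is called $\Theta$. If you substitute your full difference tensor into (\ref{123b}) the bookkeeping will not close. Second, and more substantively, a conformal change of a Finsler metric changes the geodesic coefficients, hence the nonlinear connection $N^i_j$ and the horizontal distribution $\mathcal{H}$; the tilde-objects $\widetilde{\textbf{R}}$, $\widetilde{\textbf{Ric}}^H$ and $\widetilde{\textbf{Scal}}^H$ are built from $\widetilde{\mathcal{H}}$ and the new horizontal lift, not from $\mathcal{H}$ and $\textbf{h}$. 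Re-expressing them over the unperturbed horizontal lift is precisely where the operator $\mathcal{B}$ of (\ref{degjeh8b}) (the $y$-derivative of $F^2g^{ir}-2y^iy^r$ contracted with $\nabla_ru$) enters, and your plan never addresses this step; you treat $\mathcal{B}$ as mere index-raising on $du$. Without that ingredient you cannot produce the $\mathcal{A}$--$\mathcal{B}$ lines of (\ref{123b}), so as written the argument has a genuine gap, even though the overall strategy is the right one.
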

 \begin{prof1a}
  Let $F$ and $\widetilde{F}$ be two conformal Finslerian metrics on a manifold 
 of dimension $n$. If $F$ is conformally $R$-Einstein then 
 $\widetilde{\textbf{E}}_{\widetilde{F}}^H$ vanishes. By the Lemma \ref{lemgrosb}, in a local chart we have
 \begin{eqnarray}\label{005b0}
  0&=& \Big[\textbf{E}_F^H
 -(n-2)\left(H_u-du\circ du\right)
-\frac{(n-2)}{n}\left(\Delta^Hu+||\triangledown u||_g^2\right)\underline{g}\Big](\partial_i,\hat{\partial}_j)\nonumber\\
&&+{\varPsi}_u^{\textbf{E}_F^H}(\partial_i,\hat{\partial}_j)
\end{eqnarray}
where
 \begin{eqnarray}\label{005b}
{\varPsi}_u^{\textbf{E}_F^H}(\partial_i,\hat{\partial}_j)
&=&(2-n)\Big[\mathcal{A}(\triangledown u,\mathcal{B}(\hat{\partial}_j),\partial_i)
 +\mathcal{A}(\triangledown u,\pi_*\hat{\partial}_j,\mathcal{B}(\textbf{h}(\partial_i)))\Big]\nonumber\\
  &&+(n-4)\mathcal{A}(\mathcal{B}(\textbf{h}(\triangledown u),\pi_*\hat{\partial}_j,\partial_i))\nonumber\\
&&
  +\frac{1}{n}g^{kl}\Big[2(n-2)\mathcal{A}(\triangledown u,\partial_k,\mathcal{B}(\hat{\partial}_l)))
  -3\mathcal{A}(\mathcal{B}(\textbf{h}(\triangledown u),
  \partial_l,\partial_k))\Big]g(\partial_i,\pi_*\hat{\partial}_j).\nonumber\\
    &&+g^{ij}\Big[g\left(\Theta(\hat{\partial}_j,\textbf{h}(\Theta(\hat{\partial}_l,\textbf{h}(\partial_i)))),\partial_k\right)
  -g\left(\Theta(\hat{\partial}_j,\textbf{h}(\Theta(\hat{\partial}_l,\textbf{h}(\partial_i))),\partial_k\right)\Big]\nonumber\\
  &&+g^{kl}\Big[g\left((\nabla_j\Theta) (\hat{\partial}_l,\textbf{h}(\partial_i)),\partial_k\right)
  -g\left((\nabla_l\Theta) (\textbf{h}(\partial_i),\hat{\partial}_j),\partial_k\right)\Big]\nonumber\\
  &&
-\frac{1}{n}g^{rs}g^{kl}\Big[
  \mathcal{A}(\mathcal{B}(\textbf{h}(\Theta_{sk}),\partial_l,\partial_r))
-\mathcal{A}(\mathcal{B}(\textbf{h}(\Theta_{kl}),\partial_s,\partial_r))
  \Big]g_{ij}\nonumber\\
  &&-\frac{1}{n}g^{rs}g^{kl}\left[g\left((\nabla_l\Theta)_{sk},\partial_r\right)
  -g\left((\nabla_s\Theta)_{kl},\partial_r\right)\right]g_{ij}.
 \end{eqnarray}
 
 Using the relation (\ref{djgdhdgkgd1}), we have
  $\mathcal{B}(\hat{\partial}_l)
                               =\mathcal{B}_{s_2}^{s_1}\delta_l^{s_2}\partial_{s_1}
                                =\mathcal{B}_l^{s_1}\partial_{s_1}$
and\\
  $\mathcal{B}(\textbf{h}(\triangledown u))=\mathcal{B}_{s_2}^{s_1}\partial_{s_1}\otimes dx^{s_2}
  (\textbf{h}(\nabla^lu\partial_l))
                              =\nabla^lu\mathcal{B}_l^{s_1}\partial_{s_1}.
 $
Thus, from (\ref{005b}), we have
 \begin{eqnarray}
  I_{1}&=&(2-n)\left[\mathcal{A}(\triangledown u,\mathcal{B}(\hat{\partial}_j),\partial_i)
 +\mathcal{A}(\triangledown u,\pi_*\hat{\partial}_j,\mathcal{B}(\textbf{h}(\partial_i)))\right]\nonumber\\
  &&+(n-4)\mathcal{A}(\mathcal{B}(\textbf{h}(\triangledown u),\pi_*\hat{\partial}_j,\partial_i))\nonumber\\
&=&(n-4)\nabla^{s_2}u\mathcal{B}_{s_2}^{s_1}\mathcal{A}_{s_1ij}
-(n-2)\Big(\nabla^{s_2}u\mathcal{B}_{i}^{s_1}\mathcal{A}_{s_1js_2}
+\nabla^{s_2}u\mathcal{B}_{j}^{s_1}\mathcal{A}_{s_1is_2}\Big),\nonumber\\
I_{2}&=&\frac{1}{n}g^{kl}\Big[2(n-2)\mathcal{A}(\triangledown u,\partial_k,\mathcal{B}(\hat{\partial}_l)))
  -3\mathcal{A}(\mathcal{B}(\textbf{h}(\triangledown u),
  \partial_l,\partial_k))\Big]g_{ij}\nonumber\\
   &=&-\frac{1}{n}g^{kl}\nabla^{s_2}u\Big[-3\mathcal{B}_{s_2}{s_1}\mathcal{A}_{s_1kl}
   +3\mathcal{B}_{k}{s_1}\mathcal{A}_{s_1ls_2}-(2n-1)\mathcal{B}_{k}{s_1}\mathcal{A}_{s_1ls_2}\Big]g_{ij},\nonumber\\
   I_{3}&=&-\frac{1}{n}g^{rs}g^{kl}\left[
  \mathcal{A}(\mathcal{B}(\textbf{h}(\Theta_{sk}),\partial_l,\partial_r))
-\mathcal{A}(\mathcal{B}(\textbf{h}(\Theta_{kl}),\partial_s,\partial_r))
  \right]g_{ij},\nonumber\
\end{eqnarray}
 \begin{eqnarray}
       I_{4}&=&g^{kl}\left[g\left(\Theta(\hat{\partial}_j,\textbf{h}(\Theta(\hat{\partial}_l,\textbf{h}(\partial_i)))),\partial_k\right)
  -g\left(\Theta(\hat{\partial}_l,\textbf{h}(\Theta(\hat{\partial}_j,\textbf{h}(\partial_i))),\partial_k\right)\right]\nonumber\\
   &=&g^{kl}\delta_i^r\delta_j^s\left[g\left(\Theta(\hat{\partial}_s,\textbf{h}
  (\Theta(\hat{\partial}_l,\textbf{h}(\partial_r)))),\partial_k\right)
  -g\left(\Theta(\hat{\partial}_l,\textbf{h}(\Theta(
  \hat{\partial}_s,\textbf{h}(\partial_r))),\partial_k\right)\right]\nonumber\\
  &=&\frac{1}{n}g^{kl}g^{rs}g_{ij}\left[g\left(\Theta(\hat{\partial}_s,\textbf{h}
  (\Theta(\hat{\partial}_l,\textbf{h}(\partial_r)))),\partial_k\right)
  -g\left(\Theta(\hat{\partial}_l,\textbf{h}(\Theta(
  \hat{\partial}_s,\textbf{h}(\partial_r))),\partial_k\right)\right]\nonumber\\
   &=&-I_{3},\nonumber\\
   I_{5}&=&-\frac{1}{n}g^{rs}g^{kl}\left[g\left((\nabla_l\Theta)_{sk},\partial_r\right)
  -g\left((\nabla_s\Theta)_{kl},\partial_r\right)\right]g_{ij},\nonumber\\
 I_{16}&=&g^{ij}\left[g\left((\nabla_X\Theta) (\hat{\partial}_j,\textbf{h}(\xi)),\partial_i\right)
  -g\left((\nabla_j\Theta) (\textbf{h}(\xi),X),\partial_i\right)\right]\nonumber\\
       &=&-I_{5}.\nonumber
 \end{eqnarray}
 Hence, putting the expressions of $I_{1},I_{2},I_{3},I_{4},I_{5}$ and $I_{6}$ 
in the right-hand side of (\ref{005b0}) we obtain the equation \ref{108cd}.
 \end{prof1a}
\begin{rem}
 The equation (\ref{108cd}) is called Finslerian locally conformal $R$-Einstein equation.
\end{rem}
 \section{Locally conformally $R$-Einstein metrics in dimensions $1$ and $2$}\label{Section5}
 \subsection{For $n=1$}
 Every Finslerian metric is conformally $R$-Einstein.
 \begin{theorem}
  Let $(M,F)$ be a Finslerian manifold of dimension one. Then $(M,F)$ is always $R$-flat.
 \end{theorem}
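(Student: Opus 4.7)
The plan is to prove this by a dimensional reasoning on the horizontal distribution. Since $n=1$, the base manifold $M$ admits only one local coordinate $x^1$, so by the splitting (\ref{decomposition}) and the definition of the Ehresmann--Finsler connection, the horizontal subbundle $\mathcal{H}\subset T\mathring{T}M$ is one-dimensional, locally spanned by $\frac{\delta}{\delta x^1}$.

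First I would unpack the definition of the first horizontal curvature, $\textbf{R}(\xi,\eta,X,Y)=g(\phi(\hat{X},\hat{Y})\xi,\eta)$, where $\hat{X},\hat{Y}$ denote the horizontal projections of $X,Y\in\chi(\mathring{T}M)$. Directly from the formula for $\phi$ in the definition of the full curvature, $\phi(\hat{X},\hat{Y})$ is antisymmetric in its two vector-field arguments. Any two vectors in a one-dimensional space are linearly dependent, so this antisymmetry forces $\phi(\hat{X},\hat{Y})\equiv 0$ for every $X,Y\in\chi(\mathring{T}M)$. Hence $\textbf{R}\equiv 0$, which is exactly the $R$-flatness assertion. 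As an immediate corollary, the horizontal Ricci tensor $\textbf{Ric}_F^H$ defined in (\ref{03as}) and the horizontal scalar curvature $\textbf{Scal}_F^H$ in (\ref{Sd1}) both vanish identically.

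I do not anticipate a real obstacle; the only conceptual point to make explicit is that the argument applies uniformly to both the Chern and the Cartan connections, which is precisely the content of Proposition \ref{pro01b}: in the horizontal direction the two connections induce the same curvature, so $R$-flatness holds regardless of which connection one chooses to work with. The proof is therefore essentially a one-line consequence of $\dim\mathcal{H}=1$ together with the antisymmetry of $\phi(\cdot,\cdot)$ in its last two slots.
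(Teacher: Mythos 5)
Your proof is correct and follows essentially the same route as the paper, which also deduces $\textbf{R}\equiv 0$ from the skew-symmetry of the curvature in its two horizontal vector-field slots together with the fact that $\mathcal{H}$ has rank one when $n=1$. Your version merely makes explicit the tensoriality of $\phi$ in $X,Y$ and the pointwise linear-dependence argument, which is a welcome elaboration but not a different method.
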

\begin{proof}
 This follows from the Lemma \ref{lem1} and the skewsymmetry of the curvature $\textbf{R}$.
\end{proof}
\subsection{For $n=2$: Proof of the Theorem \ref{theo1a1}}
\begin{proof}
 When $n=2$, the equation (\ref{108cd}) reduces to
   \begin{eqnarray}
 &&\textbf{E}_F(\partial_i,\hat{\partial}_j)+\frac{1}{4F}\left(\nabla_ru\nabla^{q}u\right)\frac{\partial(F^2g^{rs}-2y^ry^s)}{\partial y^q}
                  g^{kl}\mathcal{A}_{skl}g_{ij}
                 =0.\label{111}
        \end{eqnarray}
 Contracting (\ref{111}) by $g^{ij}$ yields
  \begin{eqnarray}
\frac{1}{2F}\left(\nabla_ru\nabla^{q}u\right)\frac{\partial(F^2g^{rs}-2y^ry^s)}{\partial y^q}
                  g^{kl}\mathcal{A}_{skl}
                 =0.\label{112}
        \end{eqnarray}
 Since $F$ is a Finslerian metric, $F(x,y)\neq0$ for every $(x,y)\in \mathring{T}M$
 and since $g$ is positive-definite the $g^{kl}$ functions do not vanish for any $k,l\in \{1,2\}$.
 Hence, the only solution of the equation (\ref{112}) are $\nabla_ru=0$ or $\mathcal{A}\equiv 0$.
 \begin{itemize}
  \item [(i)]If $\nabla_ru=0$, the conformal factor $e^u$ is constant. Further, if $u$ is constant then by equation (\ref{111})
  $\textbf{E}_F^H$ vanishes.
  \item [(ii)]If $\mathcal{A}\equiv 0$, by Deicke's theorem \cite{biBing}, $F$ is Riemannian. Hence, the result follows by the fact 
  that any Riemannian metric on a $2$-dimensional manifold is Einstein (see \cite{biBao2}).
 \end{itemize}
 
        Conversely, if the conformal deformation is homothetic and $F$ is horizontally locally Einstein then the relation (\ref{111}) is satisfied. 
        Thus, if $\mathcal{A}$ vanishes it is known that 
         $F$ is Riemannian and, when $n=2$, every Riemannian metric in conformally Einstein.
\end{proof}
        \begin{exe}\label{ex4c}
For a Finsler-Minkowskian metric on $\mathbb{R}^2$, $F(x,y)=F(y)$. It is known \cite{biMatveev}
the conformal deformations of $F$ 
         are of the form $\widetilde{F}=cF$ for all $c>0$. Since, the $\textbf{R}$-curvature on $\mathbb{R}^2$ vanishes, the tensor 
         $\textbf{E}_F^H$ vanishes. Then $F$ is globally (and automatically locally) conformally $R$-Einstein. 
        \end{exe}       
\section{Locally conformally $R$-Einstein metrics on a cylinder of dimension $n\geq3$}\label{Section6}
        \subsection{Warped product of Finslerian metrics}
Let $\stackrel{\oldstylenums{1}}{M}$ and $\stackrel{\oldstylenums{2}}{M}$ be two manifolds. The set of all product coordinate systems 
in $\stackrel{\oldstylenums{1}}{M}\times \stackrel{\oldstylenums{2}}{M}$ is an atlas on 
$M=\stackrel{\oldstylenums{1}}{M}\times \stackrel{\oldstylenums{2}}{M}$ called \textit{product manifold} of 
$\stackrel{\oldstylenums{1}}{M}$ and $\stackrel{\oldstylenums{2}}{M}$. 
\begin{exe}\label{exe3}The product $\mathbb{R}\times \stackrel{\oldstylenums{1}}{M}$ is called an 
 infinite cylinder over $\stackrel{\oldstylenums{1}}{M}$.
\end{exe}
\begin{exe}
 In the Example \ref{exe3}, if we replace $\mathbb{R}$ by an open interval $(1,\varepsilon)$, we obtain a finite cylinder 
 $(1,\varepsilon)\times \stackrel{\oldstylenums{1}}{M}$ over $\stackrel{\oldstylenums{1}}{M}$.
\end{exe}

\begin{rem}
 In general, the product manifold of $k$ manifolds 
 $\stackrel{\oldstylenums{1}}{M}$,..., $\stackrel{\oldstylenums{k-1}}{M}$ and $\stackrel{\oldstylenums{k}}{M}$ is the cartesian product 
 $M=\stackrel{\oldstylenums{1}}{M}\times ... \times \stackrel{\oldstylenums{k}}{M}$.
\end{rem}

Let $\stackrel{\oldstylenums{1}}{M}$ and $\stackrel{\oldstylenums{2}}{M}$ be two $C^{\infty}$ manifolds. 
For every $(x_1,x_2)\in \stackrel{\oldstylenums{1}}{M}\times \stackrel{\oldstylenums{2}}{M}$, we have the following properties 
derived from $\stackrel{\oldstylenums{1}}{M}$ and $\stackrel{\oldstylenums{2}}{M}$.
\begin{itemize}
 \item [(1)] The projections 
 \begin{eqnarray}
  \stackrel{\oldstylenums{1}}{p}&:&\stackrel{\oldstylenums{1}}{M}\times \stackrel{\oldstylenums{2}}{M}\longrightarrow \stackrel{\oldstylenums{1}}{M}
  \text{		such that		} \stackrel{\oldstylenums{1}}{p}(x_1,x_2)=x_1\nonumber\\
   \stackrel{\oldstylenums{2}}{p}&:&\stackrel{\oldstylenums{1}}{M}\times \stackrel{\oldstylenums{2}}{M}\longrightarrow \stackrel{\oldstylenums{1}}{M}
  \text{		such that		} \stackrel{\oldstylenums{2}}{p}(x_1,x_2)=x_2\nonumber
 \end{eqnarray}
are $C^{\infty}$ submersions.
  \item[(2)] $dim(\stackrel{\oldstylenums{1}}{M}\times \stackrel{\oldstylenums{2}}{M})=dim 
  \stackrel{\oldstylenums{1}}{M}+ dim \stackrel{\oldstylenums{2}}{M}$.
\end{itemize}

The warped product manifold of two Finslerian manifolds is defined as follows. 
\begin{definition}\label{defi11c}
 Let $(\stackrel{\oldstylenums{1}}{M},\stackrel{\oldstylenums{1}}{F})$ and
 $(\stackrel{\oldstylenums{2}}{M},\stackrel{\oldstylenums{2}}{F})$ be two Finslerian manifolds. 
 Let $f$ be a positive $C^{\infty}$ function on $\stackrel{\oldstylenums{1}}{M}$. 
 The warped product of $(\stackrel{\oldstylenums{1}}{M},\stackrel{\oldstylenums{1}}{F})$ and
 $(\stackrel{\oldstylenums{2}}{M},\stackrel{\oldstylenums{2}}{F})$ is a manifold 
 $M=\stackrel{\oldstylenums{1}}{M}\times_f \stackrel{\oldstylenums{2}}{M}$ equipped with the Finslerian metric
 \begin{eqnarray}\label{11c1}
  F:\mathring{T}\stackrel{\oldstylenums{1}}{M}\times \mathring{T}\stackrel{\oldstylenums{2}}{M}\longrightarrow \mathbb{R}^+
 \end{eqnarray}
 such that for any vector tangent $y\in T_xM$, 
 with $x=(x_1,x_2)\in M$ and $y=(y_1,y_2)$,
 \begin{eqnarray}\label{11c}
  F(x,y)=\sqrt{\stackrel{\oldstylenums{1}}{F^2}(x_1,\stackrel{\oldstylenums{1}}{p}_*y)+
  f^2(\stackrel{\oldstylenums{1}}{p}(x_1,x_2))\stackrel{\oldstylenums{2}}{F^2}(x_2,\stackrel{\oldstylenums{2}}{p}_*y)}
 \end{eqnarray}
  where 
 $\stackrel{\oldstylenums{1}}{p}$ 
 and $\stackrel{\oldstylenums{2}}{p}$ are respectively the projections of $\stackrel{\oldstylenums{1}}{M}\times \stackrel{\oldstylenums{2}}{M}$ 
 onto $\stackrel{\oldstylenums{1}}{M}$ and 
 $\stackrel{\oldstylenums{2}}{M}$. 
\end{definition}

\begin{rem}Let $F$ be a Finsler metric on a warped product manifold 
$\stackrel{\oldstylenums{1}}{M}\times_f \stackrel{\oldstylenums{2}}{M}$.
\begin{itemize}
 \item [(1)]$F$ is not $C^{\infty}$ on the tangent vectors 
 of the form $(y_1,0)$ nor $(0,y_2)$ at a point $(x_1,x_2)\in \stackrel{\oldstylenums{1}}{M}\times_f\stackrel{\oldstylenums{2}}{M}$.
 \item[(2)]$\stackrel{\oldstylenums{1}}{M}$ is called the base manifold while $\stackrel{\oldstylenums{2}}{M}$ is
 the fiber manifold and $f$ is called the warping function.\\ If $f\equiv 1$ then
 $(\stackrel{\oldstylenums{1}}{M}\times_f \stackrel{\oldstylenums{2}}{M},
 \sqrt{\stackrel{\oldstylenums{1}}{F^2}(x_1,\stackrel{\oldstylenums{1}}{p}_*y)+
  f^2(\stackrel{\oldstylenums{1}}{p}(x_1,x_2))\stackrel{\oldstylenums{2}}{F^2}(x_2,\stackrel{\oldstylenums{2}}{p}_*y)}$ 
  reduces to a Finslerian product manifold 
 $(\stackrel{\oldstylenums{1}}{M}\times 
 \stackrel{\oldstylenums{2}}{M},\sqrt{\stackrel{\oldstylenums{1}}{F^2}(x_1,\stackrel{\oldstylenums{1}}{p}_*y)+
  \stackrel{\oldstylenums{2}}{F^2}(x_2,\stackrel{\oldstylenums{2}}{p}_*y)}$.\\
\end{itemize}
\end{rem}

The function $F$ defined in (\ref{11c1}) and (\ref{11c}) is a Finslerian manifold. More precisely,
\begin{itemize}
 \item[(i)]$F$ is $C^{\infty}$ on 
 $\mathring{T}\stackrel{\oldstylenums{1}}{M}\times \mathring{T}\stackrel{\oldstylenums{2}}{M}$ since $\stackrel{\oldstylenums{1}}{F}$
 and $\stackrel{\oldstylenums{2}}{F}$ are respectively 
 $C^{\infty}$ on $\mathring{T}\stackrel{\oldstylenums{1}}{M}$ and $\mathring{T}\stackrel{\oldstylenums{2}}{M}$.
 \item [(ii)]$F$ is homogeneous of degree $1$ in $y=(y_1,y_2)\in T_xM$. Namely, for any $c>0$,
 \begin{eqnarray}\label{}
  F(x,cy)&\stackrel{(\ref{11c})}{=}&\sqrt{\stackrel{\oldstylenums{1}}{F^2}(x_1,(cy_1))+
  f^2(x_1)\stackrel{\oldstylenums{2}}{F^2}(x_2,(cy_2))}\nonumber\\
    &=&c\sqrt{\stackrel{\oldstylenums{1}}{F^2}(x_1,y_1)+f^2(x_1)\stackrel{\oldstylenums{2}}{F^2}(x_2,y_2)}\nonumber\\
    &=&cF(x,y).\nonumber
 \end{eqnarray}
 \item [(iii)]If $n_1$ and $n_2$ are respectively the dimensions of $(\stackrel{\oldstylenums{1}}{M},\stackrel{\oldstylenums{1}}{F})$
 and $(\stackrel{\oldstylenums{2}}{M},\stackrel{\oldstylenums{2}}{F})$,
  each element of the Hessian matrix $(g_{ij}(x,y))_{1\leq i,j\leq n_1+n_2}$ of $\frac{1}{2}F^2$,
  has the form:
 \begin{eqnarray}\label{}
  g_{ij}(x,y)&:=&\frac{\partial^2\left[\frac{1}{2}F^2(x,y)\right]}{\partial y^i\partial y^j}\nonumber\\
   &=&\frac{1}{2}\frac{\partial^2\left[\stackrel{\oldstylenums{1}}{F^2}(x_1,y_1)+
   f^2(x_1)\stackrel{\oldstylenums{2}}{F^2}(x_2,y_2)\right]}{\partial y^i\partial y^j}\nonumber\\
   &=&\frac{1}{2}\frac{\partial^2\stackrel{\oldstylenums{1}}{F^2}(x_1,y_1)}{\partial y_1^i\partial y_1^j}
   +\frac{1}{2}f^2(x_1)\frac{\partial^2\stackrel{\oldstylenums{2}}{F^2}(x_2,y_2)}{\partial y_2^i\partial y_2^j}.\nonumber
 \end{eqnarray}
 for every point $(x,y)=(x_1,x_2,y_1,y_2)\in \mathring{T}\stackrel{\oldstylenums{1}}{M}\times \mathring{T}\stackrel{\oldstylenums{2}}{M}$.
 Thus,
  \begin{eqnarray}\label{31cc}
\big(g_{ij}(x,y)\big)=
\left(\begin{array}{cc}
\big(\stackrel{\oldstylenums{1}}{g}_{ij}(x_1,y_1)\big) & 0 \\
 0  & \big(\stackrel{\oldstylenums{2}}{g}_{ij}(x_2,y_2)\big)
 \end{array} \right)
 \end{eqnarray}
 where $\stackrel{\oldstylenums{1}}{g}_{ij}(x_1,y_1):
 =\frac{1}{2}\frac{\partial^2\stackrel{\oldstylenums{1}}{F^2}(x_1,y_1)}{\partial y_1^i\partial y_1^j}$ and 
 $\stackrel{\oldstylenums{2}}{g}_{ij}(x_2,y_2):=
 \frac{1}{2}f^2(x_1)\frac{\partial^2\stackrel{\oldstylenums{2}}{F^2}(x_2,y_2)}{\partial y_2^i\partial y_2^j}.$ 
 So the fundamental tensor $g$ of $F$ is positive definite at every point
 $(x_1,x_2,y_1,y_2)\in \mathring{T}\stackrel{\oldstylenums{1}}{M}\times \mathring{T}\stackrel{\oldstylenums{2}}{M}$ 
 since $\stackrel{\oldstylenums{1}}{g}$ and $\stackrel{\oldstylenums{2}}{g}$ are.
\end{itemize}
\subsection{Curvatures associated with warped product Finslerian metrics}
 Given the submersions $\stackrel{\oldstylenums{1}}{\pi}:\mathring{T}\stackrel{\oldstylenums{1}}{M}\longrightarrow 
 \stackrel{\oldstylenums{1}}{M}$ and  $\stackrel{\oldstylenums{2}}{\pi}:\mathring{T}\stackrel{\oldstylenums{2}}{M}\longrightarrow 
 \stackrel{\oldstylenums{2}}{M}$, the fundamental tensors $\stackrel{\oldstylenums{1}}{g}$ and $\stackrel{\oldstylenums{2}}{g}$ associated 
 with $\stackrel{\oldstylenums{1}}{F}$ and $\stackrel{\oldstylenums{2}}{F}$ are Riemannian metrics on the 
 respective pulled-back tangent bundles $\stackrel{\oldstylenums{1}}{\pi^*}T\stackrel{\oldstylenums{1}}{M}$ 
 and $\stackrel{\oldstylenums{2}}{\pi^*}T\stackrel{\oldstylenums{2}}{M}$. 
 Thus, $\stackrel{\oldstylenums{1}}{\pi}$ gives rise to the Ehresmann-Finsler 
 connection 
 \begin{eqnarray}
\stackrel{\oldstylenums{1}}{\mathcal{H}}=ker \theta_1   \text{  ~~~~~		where	   ~~~~~	}
\theta_1:T\mathring{T}\stackrel{\oldstylenums{1}}{M}\longrightarrow \stackrel{\oldstylenums{1}}{\pi^*}T\stackrel{\oldstylenums{1}}{M}
 \end{eqnarray}
while $\stackrel{\oldstylenums{2}}{\pi}$ give rise to the Ehresmann-Finsler
 \begin{eqnarray}
\stackrel{\oldstylenums{2}}{\mathcal{H}}=ker \theta_2 \text{	 ~~~~~  	where	   ~~~~~	}
\theta_2:T\mathring{T}\stackrel{\oldstylenums{2}}{M}\longrightarrow \stackrel{\oldstylenums{2}}{\pi^*}T\stackrel{\oldstylenums{2}}{M}.
 \end{eqnarray}
 
 The Ehresmann-Finslerian product connection $\mathcal{H}$ is given by the product form $\theta$ of $\theta_1$ and $\theta_2$, 
 that is
 \begin{eqnarray}
  \theta=\theta_1\times\theta_2:T\mathring{T}\stackrel{\oldstylenums{1}}{M}\times T\mathring{T}\stackrel{\oldstylenums{2}}{M}\equiv
  T(\mathring{T}\stackrel{\oldstylenums{1}}{M}\times\mathring{T}\stackrel{\oldstylenums{2}}{M})\longrightarrow 
  \stackrel{\oldstylenums{1}}{\pi^*}T\stackrel{\oldstylenums{1}}{M}\times \stackrel{\oldstylenums{2}}{\pi^*}T\stackrel{\oldstylenums{2}}{M}
 \end{eqnarray}
such that 
\begin{eqnarray}
ker \theta=ker (\theta_1\times\theta_2)=ker \theta_1\oplus ker \theta_2.
\end{eqnarray}

Now, let $\stackrel{\oldstylenums{1}}{\mathcal{V}}$ and $\stackrel{\oldstylenums{2}}{\mathcal{V}}$ be the vertical subbundle of 
$T\mathring{T}\stackrel{\oldstylenums{1}}{M}$ and $ T\mathring{T}\stackrel{\oldstylenums{2}}{M}$, respectively. We obtain the 
following decomposition
\begin{eqnarray}
 T\mathring{T}(\stackrel{\oldstylenums{1}}{M}\times \stackrel{\oldstylenums{1}}{M})=
 \stackrel{\oldstylenums{1}}{\mathcal{H}}\oplus \stackrel{\oldstylenums{1}}{\mathcal{V}}
 \oplus \stackrel{\oldstylenums{2}}{\mathcal{H}}\oplus \stackrel{\oldstylenums{2}}{\mathcal{V}}.
\end{eqnarray}
\begin{pro}\label{pro2.1}
 Let $(\stackrel{\oldstylenums{1}}{M},\stackrel{\oldstylenums{1}}{F})$ and
 $(\stackrel{\oldstylenums{2}}{M},\stackrel{\oldstylenums{2}}{F})$ be two Finslerian manifolds. 
 On a warped product manifold $M=\stackrel{\oldstylenums{1}}{M}\times_f\stackrel{\oldstylenums{2}}{M}$, if 
 $\stackrel{\oldstylenums{1}}{\xi}\in\Gamma(\stackrel{\oldstylenums{1}}{\pi^*}T\stackrel{\oldstylenums{1}}{M})$, 
 $\stackrel{\oldstylenums{2}}{\xi}\in\Gamma(\stackrel{\oldstylenums{2}}{\pi^*}T\stackrel{\oldstylenums{2}}{M})$ and 
  $\stackrel{\oldstylenums{1}}{X}\in\chi(\mathring{T}\stackrel{\oldstylenums{1}}{M})$ then 
  \begin{itemize}
   \item [(i)]$\nabla_{\stackrel{\oldstylenums{1}}{X}}\stackrel{\oldstylenums{1}}{\xi}
   =\stackrel{\oldstylenums{1}}{\nabla}_{\stackrel{\oldstylenums{1}}{X}}\stackrel{\oldstylenums{1}}{\xi}$ where 
   $\stackrel{\oldstylenums{1}}{\nabla}$ is the Chern connection associated with $(\stackrel{\oldstylenums{1}}{M},\stackrel{\oldstylenums{1}}{F})$.
   \item [(ii)]$\nabla_{\stackrel{\oldstylenums{1}}{X}}\stackrel{\oldstylenums{2}}{\xi}
   =\frac{1}{f}\stackrel{\oldstylenums{1}}{X}(f)\stackrel{\oldstylenums{2}}{\xi}$.
  \end{itemize}
\end{pro}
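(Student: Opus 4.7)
\medskip
\noindent\textbf{Proof proposal for Proposition \ref{pro2.1}.} The plan is to use uniqueness of the Chern connection (Lemma \ref{lem1}) together with the block-diagonal form (\ref{31cc}) of the fundamental tensor of the warped product. The key preliminary observation is that the block structure of $g$ propagates to $\theta$, to $\mathcal{H}\oplus\mathcal{V}$, and to the Cartan tensor $\mathcal{A}=\frac{F}{2}\frac{\partial g_{ij}}{\partial y^k}dx^i\otimes dx^j\otimes dx^k$. Concretely, since $g_{i_1j_1}$ depends only on $(x_1,y_1)$, $g_{i_2j_2}=f^2(x_1)\,\stackrel{\oldstylenums{2}}{g}_{i_2j_2}(x_2,y_2)$, and mixed components vanish, only the components of $\mathcal{A}$ having all three indices in the \emph{same} factor can be nonzero (the $f^2(x_1)$ is $y$-independent, so it contributes nothing to $\mathcal{A}$). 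I will use this repeatedly without re-deriving it each time.

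For part (i) I view $\stackrel{\oldstylenums{1}}{\xi}$ and $\stackrel{\oldstylenums{1}}{X}$ as a section of $\pi^*TM$ and a vector field on $\mathring{T}M$ supported in the first factor. The block form of $g$ forces $g(\stackrel{\oldstylenums{1}}{\xi},\stackrel{\oldstylenums{2}}{\eta})\equiv 0$, and the triple $\mathcal{A}(\theta(\stackrel{\oldstylenums{1}}{X}),\stackrel{\oldstylenums{1}}{\xi},\cdot)$ has a nonzero value only when the third argument also lies in the first factor. Thus the two defining identities of Lemma \ref{lem1}, restricted to arguments pulled back from $\stackrel{\oldstylenums{1}}{M}$, coincide term-by-term with the defining identities of $\stackrel{\oldstylenums{1}}{\nabla}$. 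Uniqueness of the Chern connection on $(\stackrel{\oldstylenums{1}}{M},\stackrel{\oldstylenums{1}}{F})$ gives (i).

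For part (ii) I first show $\nabla_{\stackrel{\oldstylenums{1}}{X}}\stackrel{\oldstylenums{2}}{\xi}$ has no type-$1$ component. Pairing with an arbitrary type-$1$ section $\stackrel{\oldstylenums{1}}{\eta}$ in Lemma \ref{lem1}(ii):
\begin{eqnarray}
\stackrel{\oldstylenums{1}}{X}\!\bigl(g(\stackrel{\oldstylenums{2}}{\xi},\stackrel{\oldstylenums{1}}{\eta})\bigr)
=g(\nabla_{\stackrel{\oldstylenums{1}}{X}}\stackrel{\oldstylenums{2}}{\xi},\stackrel{\oldstylenums{1}}{\eta})
+g(\stackrel{\oldstylenums{2}}{\xi},\nabla_{\stackrel{\oldstylenums{1}}{X}}\stackrel{\oldstylenums{1}}{\eta})
+2\mathcal{A}(\theta(\stackrel{\oldstylenums{1}}{X}),\stackrel{\oldstylenums{2}}{\xi},\stackrel{\oldstylenums{1}}{\eta});\nonumber
\end{eqnarray}
the left-hand side vanishes by block-diagonality, the middle right-hand term vanishes because $\nabla_{\stackrel{\oldstylenums{1}}{X}}\stackrel{\oldstylenums{1}}{\eta}$ is again type $1$ by (i), and the Cartan term vanishes because its indices are of mixed type $(1,2,1)$. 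Hence $g(\nabla_{\stackrel{\oldstylenums{1}}{X}}\stackrel{\oldstylenums{2}}{\xi},\stackrel{\oldstylenums{1}}{\eta})=0$ for all $\stackrel{\oldstylenums{1}}{\eta}$, so $\nabla_{\stackrel{\oldstylenums{1}}{X}}\stackrel{\oldstylenums{2}}{\xi}$ is type $2$.

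To pin down the type-$2$ component, I pair against an arbitrary $\stackrel{\oldstylenums{2}}{\eta}$ and use $g(\stackrel{\oldstylenums{2}}{\xi},\stackrel{\oldstylenums{2}}{\eta})=f^2\,\stackrel{\oldstylenums{2}}{g}(\stackrel{\oldstylenums{2}}{\xi},\stackrel{\oldstylenums{2}}{\eta})$, noting that $\stackrel{\oldstylenums{2}}{g}(\stackrel{\oldstylenums{2}}{\xi},\stackrel{\oldstylenums{2}}{\eta})$ is a function pulled back from $\mathring{T}\stackrel{\oldstylenums{2}}{M}$ and is killed by $\stackrel{\oldstylenums{1}}{X}$, while the Cartan contribution has mixed type $(1,2,2)$ and vanishes. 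This yields
\begin{eqnarray}
2f\,\stackrel{\oldstylenums{1}}{X}(f)\,\stackrel{\oldstylenums{2}}{g}(\stackrel{\oldstylenums{2}}{\xi},\stackrel{\oldstylenums{2}}{\eta})
= g(\nabla_{\stackrel{\oldstylenums{1}}{X}}\stackrel{\oldstylenums{2}}{\xi},\stackrel{\oldstylenums{2}}{\eta})
+ g(\stackrel{\oldstylenums{2}}{\xi},\nabla_{\stackrel{\oldstylenums{1}}{X}}\stackrel{\oldstylenums{2}}{\eta}).\nonumber
\end{eqnarray}
To extract a single term I verify uniqueness by the candidate approach: define $\widetilde{\nabla}_{\stackrel{\oldstylenums{1}}{X}}\stackrel{\oldstylenums{2}}{\xi}:=\tfrac{1}{f}\stackrel{\oldstylenums{1}}{X}(f)\stackrel{\oldstylenums{2}}{\xi}$ and, combined with the rule from (i) and the symmetric rules in the second-factor directions, check that $\widetilde{\nabla}$ satisfies both axioms of Lemma \ref{lem1}; the metric-compatibility check reduces to the displayed identity, and the torsion-free identity reduces to $\stackrel{\oldstylenums{1}}{\nabla}$'s torsion-free identity on each factor together with the computation $[\stackrel{\oldstylenums{1}}{X},\textbf{h}(\stackrel{\oldstylenums{2}}{\xi})]$-like brackets, whose $\pi_*$ is handled by the same scalar factor $\tfrac{1}{f}\stackrel{\oldstylenums{1}}{X}(f)$. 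Uniqueness of the Chern connection then forces $\nabla=\widetilde{\nabla}$, giving (ii).

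The main obstacle is the bookkeeping for the Cartan tensor on the warped product: one has to confirm carefully that only the "pure" components of $\mathcal{A}$ (indices all in the same factor) survive, and that this exactly neutralizes every term in Lemma \ref{lem1}(ii) that could mix the two factors; once this is established, the rest is routine manipulation with the block form (\ref{31cc}).
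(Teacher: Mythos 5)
Your proposal is correct and rests on the same two ingredients as the paper's proof: the block-diagonal form (\ref{31cc}) of the fundamental tensor of the warped product, and the defining properties of the Chern connection from Lemma \ref{lem1}, with the observation that all mixed-type components of $g$ and of the Cartan tensor $\mathcal{A}$ vanish (the factor $f^2(x_1)$ being $y_2$-independent up to the warping, so that $\mathcal{A}$ only has ``pure'' components). The one place where you genuinely diverge is in how you extract the single term $g(\nabla_{\stackrel{\oldstylenums{1}}{X}}\stackrel{\oldstylenums{2}}{\xi},\stackrel{\oldstylenums{2}}{\eta})$ from the symmetrized compatibility identity: the paper writes out the full polarized Koszul-type formula for $2g(\nabla_{\stackrel{\oldstylenums{1}}{X}}\cdot,\cdot)$, in which every term except $\stackrel{\oldstylenums{1}}{X}\bigl[(f\circ\stackrel{\oldstylenums{1}}{p})^{2}\stackrel{\oldstylenums{2}}{g}(\stackrel{\oldstylenums{2}}{\xi},\stackrel{\oldstylenums{2}}{\eta})\bigr]$ dies, so the coefficient $\frac{1}{f}\stackrel{\oldstylenums{1}}{X}(f)$ drops out directly; you instead keep only the axiom of Lemma \ref{lem1}(ii), which leaves the symmetric combination, and resolve the ambiguity by exhibiting the candidate connection $\widetilde{\nabla}$ and invoking uniqueness. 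Your route is sound, but note that the uniqueness step is only legitimate once $\widetilde{\nabla}$ has been specified and verified in \emph{all} directions (second-factor horizontal and vertical included), which you only sketch; the paper's direct Koszul evaluation avoids that global verification at the cost of writing out the longer formula. Both arguments deliver the same statement, and your treatment of part (i) (showing the type-$2$ component vanishes and then matching the restricted axioms with those of $\stackrel{\oldstylenums{1}}{\nabla}$) is if anything slightly more explicit than the paper's.
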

\begin{proof}
  $(i)$ From the relation of $g$-almost compatibility of $\nabla$, we obtain 
  \begin{eqnarray}
   2g(\nabla_{\stackrel{\oldstylenums{1}}{X}}\stackrel{\oldstylenums{1}}{\xi},\stackrel{\oldstylenums{2}}{\xi})
   &=&\stackrel{\oldstylenums{1}}{X}
   [g(\stackrel{\oldstylenums{1}}{\xi},\stackrel{\oldstylenums{2}}{\xi})]
   +\stackrel{\oldstylenums{1}}{\textbf{h}(\xi)}
   [g(\stackrel{\oldstylenums{1}}{\pi}_*\stackrel{\oldstylenums{1}}{X},\stackrel{\oldstylenums{2}}{\xi})]
   -\stackrel{\oldstylenums{2}}{\textbf{h}(\xi)}
   [g(\stackrel{\oldstylenums{1}}{\xi},\stackrel{\oldstylenums{1}}{\pi}_*\stackrel{\oldstylenums{1}}{X})]\nonumber\\
   &&-g(\stackrel{\oldstylenums{1}}{\pi}_*\stackrel{\oldstylenums{1}}{X}, 
   [\stackrel{\oldstylenums{1}}{\xi},\stackrel{\oldstylenums{2}}{\xi}])
   -g(\stackrel{\oldstylenums{1}}{\xi}, 
   [\stackrel{\oldstylenums{1}}{\pi}_*\stackrel{\oldstylenums{1}}{X},\stackrel{\oldstylenums{2}}{\xi}])
   +g(\stackrel{\oldstylenums{2}}{\xi}, 
   [\stackrel{\oldstylenums{1}}{\pi}_*\stackrel{\oldstylenums{1}}{X},\stackrel{\oldstylenums{1}}{\xi}])\nonumber\\
   &&+\mathcal{A}(\theta(\stackrel{\oldstylenums{1}}{X}),\stackrel{\oldstylenums{1}}{\xi},\stackrel{\oldstylenums{2}}{\xi})
   +\mathcal{A}(\theta(\stackrel{\oldstylenums{1}}{\textbf{h}(\xi)}),\stackrel{\oldstylenums{1}}{\pi}_*\stackrel{\oldstylenums{1}}{X}
   ,\stackrel{\oldstylenums{2}}{\xi})
   -\mathcal{A}(\theta(\stackrel{\oldstylenums{2}}{\textbf{h}(\xi)}),\stackrel{\oldstylenums{1}}{\pi}_*\stackrel{\oldstylenums{1}}{X}
   ,\stackrel{\oldstylenums{1}}{\xi})\nonumber\\
   &=&0.\nonumber
  \end{eqnarray}
$(ii)$ For $\stackrel{\oldstylenums{2}}{\xi},\stackrel{\oldstylenums{2}}{\eta}
\in\Gamma(\stackrel{\oldstylenums{2}}{\pi^*}T\stackrel{\oldstylenums{2}}{M})$,
  \begin{eqnarray}
   2g(\nabla_{\stackrel{\oldstylenums{1}}{X}}\stackrel{\oldstylenums{2}}{\xi},\stackrel{\oldstylenums{2}}{\eta})
  &=&\stackrel{\oldstylenums{1}}{X} 
   [g(\stackrel{\oldstylenums{2}}{\xi},\stackrel{\oldstylenums{2}}{\eta})]\nonumber\\
   &\stackrel{(\ref{31cc})}{=}&\stackrel{\oldstylenums{1}}{X} 
   [(f\circ \stackrel{\oldstylenums{1}}{p})^2\stackrel{\oldstylenums{2}}{g}
   (\stackrel{\oldstylenums{2}}{\xi},\stackrel{\oldstylenums{2}}{\eta})]\nonumber
  \end{eqnarray}
  and the relation in $(ii)$ follows.
\end{proof}
As a direct consequence, we have
\begin{cor}\label{propdgeh}
 Let $(\stackrel{\oldstylenums{1}}{M},\stackrel{\oldstylenums{1}}{F})$ and
 $(\stackrel{\oldstylenums{2}}{M},\stackrel{\oldstylenums{2}}{F})$ be two Finslerian manifolds. 
 On a warped product manifold $M=\stackrel{\oldstylenums{1}}{M}\times_f\stackrel{\oldstylenums{2}}{M}$, if 
 $\stackrel{\oldstylenums{1}}{\xi},\stackrel{\oldstylenums{1}}{\eta}\in\Gamma(\stackrel{\oldstylenums{1}}{\pi^*}T\stackrel{\oldstylenums{1}}{M})$, 
  $\stackrel{\oldstylenums{1}}{X}, \stackrel{\oldstylenums{1}}{Y}\in\chi(\mathring{T}\stackrel{\oldstylenums{1}}{M})$ and
  $\stackrel{\oldstylenums{2}}{X}\in\chi(\mathring{T}\stackrel{\oldstylenums{2}}{M})$ then 
  \begin{itemize}
   \item [(i)]$\textbf{R}(\stackrel{\oldstylenums{1}}{\xi},\stackrel{\oldstylenums{1}}{\eta},
   \stackrel{\oldstylenums{1}}{X}, \stackrel{\oldstylenums{1}}{Y})
   =\stackrel{\oldstylenums{1}}{\textbf{R}}(\stackrel{\oldstylenums{1}}{\xi},\stackrel{\oldstylenums{1}}{\eta},
   \stackrel{\oldstylenums{1}}{X}, \stackrel{\oldstylenums{1}}{Y})$.
   \item [(ii)]$\textbf{R}(\stackrel{\oldstylenums{1}}{\xi},\stackrel{\oldstylenums{1}}{\eta},
   \stackrel{\oldstylenums{2}}{X}, \stackrel{\oldstylenums{1}}{Y})=0$.
   \end{itemize}
\end{cor}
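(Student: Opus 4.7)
The plan is to derive both identities directly from the definition of the first curvature, $\textbf{R}(\xi,\eta,X,Y) = g\big(\nabla_{\hat{X}}\nabla_{\hat{Y}}\xi - \nabla_{\hat{Y}}\nabla_{\hat{X}}\xi - \nabla_{[\hat{X},\hat{Y}]}\xi,\,\eta\big),$ by using Proposition \ref{pro2.1} (possibly after proving a mild analog of it) to reduce each covariant derivative appearing in this expression. The block-diagonal shape (\ref{31cc}) of $g$ on the warped product will then let one identify the resulting scalars with those built from $\stackrel{\oldstylenums{1}}{g}$ or conclude that they vanish.

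For part $(i)$, every argument already lives in the first factor. By Proposition \ref{pro2.1}$(i)$, $\nabla_{\hat{\stackrel{\oldstylenums{1}}{Y}}}\stackrel{\oldstylenums{1}}{\xi} = \stackrel{\oldstylenums{1}}{\nabla}_{\hat{\stackrel{\oldstylenums{1}}{Y}}}\stackrel{\oldstylenums{1}}{\xi}$ is again a section of $\stackrel{\oldstylenums{1}}{\pi^*}T\stackrel{\oldstylenums{1}}{M}$, so Proposition \ref{pro2.1}$(i)$ applies a second time to $\nabla_{\hat{\stackrel{\oldstylenums{1}}{X}}}$ of this section. The commutator $[\hat{\stackrel{\oldstylenums{1}}{X}},\hat{\stackrel{\oldstylenums{1}}{Y}}]$ still lies in $\stackrel{\oldstylenums{1}}{\mathcal{H}}\oplus\stackrel{\oldstylenums{1}}{\mathcal{V}}$ because both factors act independently under the product Ehresmann connection. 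Hence $\phi(\hat{\stackrel{\oldstylenums{1}}{X}},\hat{\stackrel{\oldstylenums{1}}{Y}})\stackrel{\oldstylenums{1}}{\xi}$ coincides with its factor-one analog, and pairing with $\stackrel{\oldstylenums{1}}{\eta}$ via $g$ gives $\stackrel{\oldstylenums{1}}{g}$ by (\ref{31cc}); this yields $(i)$.

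For part $(ii)$, the preliminary step is to establish that $\nabla_{\stackrel{\oldstylenums{2}}{X}}\stackrel{\oldstylenums{1}}{\xi}=0$ for every factor-one section $\stackrel{\oldstylenums{1}}{\xi}$, by the same Koszul-type calculation used to prove Proposition \ref{pro2.1}$(i)$: each term $\stackrel{\oldstylenums{2}}{X}\big(g(\stackrel{\oldstylenums{1}}{\xi},\stackrel{\oldstylenums{1}}{\eta})\big)$ vanishes because $g(\stackrel{\oldstylenums{1}}{\xi},\stackrel{\oldstylenums{1}}{\eta})=\stackrel{\oldstylenums{1}}{g}(\stackrel{\oldstylenums{1}}{\xi},\stackrel{\oldstylenums{1}}{\eta})$ is constant along fibers of the projection $\stackrel{\oldstylenums{2}}{p}$, every bracket of factor-one with factor-two fields is zero, and the Cartan-tensor corrections evaluate on horizontal vectors of the wrong factor and vanish by (\ref{kjs}) applied to $\theta_1$ restricted to $\stackrel{\oldstylenums{2}}{\mathcal{H}}$. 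With this in hand the three terms of $\phi(\hat{\stackrel{\oldstylenums{2}}{X}},\hat{\stackrel{\oldstylenums{1}}{Y}})\stackrel{\oldstylenums{1}}{\xi}$ collapse: the outer derivative $\nabla_{\hat{\stackrel{\oldstylenums{2}}{X}}}(\stackrel{\oldstylenums{1}}{\nabla}_{\hat{\stackrel{\oldstylenums{1}}{Y}}}\stackrel{\oldstylenums{1}}{\xi})$ is zero by the cross-vanishing lemma, the swapped term is zero because its inner derivative already vanishes, and $[\hat{\stackrel{\oldstylenums{2}}{X}},\hat{\stackrel{\oldstylenums{1}}{Y}}]=0$ since horizontal lifts from distinct factors commute under the product connection $\theta=\theta_1\times\theta_2$.

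The main obstacle is the cross-vanishing lemma $\nabla_{\stackrel{\oldstylenums{2}}{X}}\stackrel{\oldstylenums{1}}{\xi}=0$: one must verify carefully that every summand in the Koszul-type formula coming from Lemma \ref{lem1} vanishes, in particular the Cartan-tensor term $\mathcal{A}(\theta(\stackrel{\oldstylenums{2}}{X}),\stackrel{\oldstylenums{1}}{\xi},\cdot)$, which uses that $\theta$ is the product of the two Ehresmann forms so that $\theta_1$ annihilates $\stackrel{\oldstylenums{2}}{X}$. Once this cross-vanishing is in place, both items of the corollary drop out immediately.
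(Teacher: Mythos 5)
Your proposal is correct and follows essentially the same route as the paper, which simply derives the corollary from Proposition \ref{pro2.1} by reducing the covariant derivatives in the definition of $\textbf{R}$ to those of the factors. You have in fact supplied more detail than the paper does, in particular by isolating and justifying the cross-vanishing identity $\nabla_{\stackrel{\oldstylenums{2}}{X}}\stackrel{\oldstylenums{1}}{\xi}=0$, which the paper uses implicitly.
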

\begin{proof}
 The proof follows from the Proposition \ref{pro2.1}.
\end{proof}
\subsection{Proof of the Theorem \ref{theo1a2}}
We consider the special case where the conformal factor only depends on the base manifold $\stackrel{\oldstylenums{1}}{M}$ of the product 
$\stackrel{\oldstylenums{1}}{M}\times \stackrel{\oldstylenums{2}}{M}$.

\begin{proof}
A Finslerian metric $F$ on a cylinder $\mathbb{R}\times \stackrel{\oldstylenums{2}}{M}$ can be written as 
$F=\sqrt{t^2+\stackrel{\oldstylenums{2}}{F}^2}$ where $\stackrel{\oldstylenums{2}}{F}$ is 
a Finslerian metric on $\stackrel{\oldstylenums{2}}{M}$. Further, if $F$ is locally conformal to the $R$-Einstein 
metric $e^{u(t)}F$, then by Proposition \ref{pro1}, we have 
\begin{itemize}
 \item [Case 1:] if $i=j=1$, that is $t=y^i=y^j$, the equation (\ref{108cd}) becomes
 \begin{eqnarray}
0&=&\textbf{E}_F(\partial_t,\hat{\partial}_t)-(n-2)\left(\nabla_t\nabla_tu-\nabla_tu\nabla_tu\right)\nonumber\\
&&+\frac{(n-2)}{n}\left(\nabla^d\nabla_du-\nabla^du\nabla_du\right)g_{tt}\nonumber\\
&&+\frac{(n-1)}{2nF}\left(\nabla_ru\nabla^{q}u\right)\frac{\partial(F^2g^{rs}-2y^ry^s)}{\partial y^q}
                  g^{kl}\mathcal{A}_{skl}g_{tt}\nonumber\\
   &=&\textbf{Ric}_F(\partial_t,\hat{\partial}_t)-\frac{1}{n}\textbf{Scal}_F^Hg_{tt}\nonumber\\
   &&-(n-2)\left(\nabla_t\nabla_tu-\nabla_tu\nabla_tu\right)\nonumber\\
&&+\frac{(n-2)}{n}g^{td}\left(\nabla_t\nabla_du-\nabla_tu\nabla_du\right)g_{tt}\nonumber\\
&&+\frac{(n-1)}{2nF}\left(\nabla_tu\nabla^{t}u\right)\frac{\partial(F^2g^{tt}-2t^2)}{\partial t}
                  g^{kl}\mathcal{A}_{tkl}\nonumber
\end{eqnarray}
since $u=u(t)$ and $y^q=t$ is a coordinate on $\mathbb{R}$. It follows that
  \begin{eqnarray}
  0 &=&-\frac{1}{n}\textbf{Scal}_F^H\nonumber\\
   &&-(n-2)\left(u^{''}-u^{'2}\right)
+\frac{(n-2)}{n}\left(u^{''}-u^{'2}\right)\nonumber\\
&&+\frac{(n-1)}{2nF}\left(u^{'2}\right)\frac{\partial(F^2-2t^2)}{\partial t}
                  g^{kl}\times 0\nonumber\\
&=&\textbf{Scal}_{\stackrel{\oldstylenums{2}}{F}}^H+(n-1)(n-2)(u^{''}-u^{'2}).
                  \label{hwgfgf3fh9}
\end{eqnarray}
 \item [Case 2:] if $i=1$ and $j\in\{2,3,...,n\}$ or $j=1$ and $i\in\{2,3,...,n\}$ that is $t\neq y^i$ 
 or $t\neq y^j$, by the Proposition \ref{propdgeh} 
 and by the fact that $u=u(t)$, each term in the left-hand side of the equation (\ref{108cd}) vanishes.
 \item [Case 3:] if $i,j\in\{2,3,...,n\}$ that is $t\neq y^i$ and $t\neq y^j$, the equation (\ref{108cd}) becomes
 \begin{eqnarray}
 0&=&\textbf{E}_F(\partial_{\alpha},\hat{\partial}_{\beta})-(n-2)\left(\nabla_{\beta}\nabla_{\alpha}u
 -\nabla_{\alpha}u\nabla_{\beta}u\right)\nonumber\\
 &&+\frac{(n-2)}{n}\left(\nabla^d\nabla_du-\nabla^du\nabla_du\right)g_{{\alpha}{\beta}}\nonumber\\
                  &&+\frac{(n-1)}{2nF}\left(\nabla_ru\nabla^{q}u\right)\frac{\partial(F^2g^{rs}-2y^ry^s)}{\partial y^q}
                  g^{kl}\mathcal{A}_{skl}g_{{\alpha}{\beta}}\nonumber\\
  &=&
\textbf{Ric}_{\stackrel{\oldstylenums{2}}{F}}(\partial_{\alpha},\hat{\partial}_{\beta})
-\frac{1}{n}\textbf{Scal}_{\stackrel{\oldstylenums{2}}{F}}^H
\stackrel{\oldstylenums{2}}{g}_{\alpha\beta}.\label{oiuhiouh}
        \end{eqnarray}
\end{itemize}

Therefore $\widetilde{F}=e^uF=e^u\sqrt{t^2+\stackrel{\oldstylenums{2}}{F}^2}$ is locally an Einstein metric if and only if
  \begin{eqnarray}\label{575543454}
0&=&\left\{ \begin{array}{ll}\textbf{Scal}_{\stackrel{\oldstylenums{2}}{F}}^H+(n-1)(n-2)(u^{''}-u^{'2})\\
\textbf{Ric}_{\stackrel{\oldstylenums{2}}{F}}(\partial_{\alpha},\hat{\partial}_{\beta})
-\frac{1}{n}\textbf{Scal}_{\stackrel{\oldstylenums{2}}{F}}^H
\stackrel{\oldstylenums{2}}{g}_{\alpha\beta}
\end{array} \right.\nonumber\\
&=& \left\{ \begin{array}{ll}
\textbf{Scal}_{\stackrel{\oldstylenums{2}}{F}}^H+(n-1)(n-2)(u^{''}-u^{'2})\\
\textbf{E}_{\stackrel{\oldstylenums{2}}{F}}(\partial_{\alpha},\hat{\partial}_{\beta}) \text{~~~for~~~} \alpha,\beta\in\{2,...,n\}.
\end{array} \right.
  \end{eqnarray}
  
 From the system (\ref{575543454}), $\stackrel{\oldstylenums{2}}{F}$ is $R$-Einstein. By the Lemma \ref{proShu}, 
  $\textbf{Scal}_{\stackrel{\oldstylenums{2}}{F}}^H=constant$. Denote this constant by $s$. The system (\ref{575543454}) 
  becomes 
 $ u^{''}-u^{'2}+\frac{s}{(n-1)(n-2)}=0 $
  or equivalently 
   \begin{eqnarray}
  u^{''}-u^{'2}+s^*=0\label{equa!c1}
  \end{eqnarray}
 where $s^*:=\frac{s}{(n-1)(n-2)}$. 
 We set $e^u=\varphi^{-1}$. Then 
 \begin{eqnarray}
  u=-ln\varphi,~~~ u^{'}=-\frac{\varphi^{'}}{\varphi}, ~~~u^{'2}=\frac{\varphi^{'2}}{\varphi^2}~~ \text{  and  }~~
  u^{''}=\frac{-\varphi^{''}\varphi+\varphi^{'2}}{\varphi^2}.\nonumber
 \end{eqnarray}
 The equation (\ref{equa!c1}) becomes
 \begin{eqnarray}
  \varphi^{''}-\varphi s^*=0.\label{eqngsgtys}
 \end{eqnarray}

 We distinguish three cases:
 \begin{itemize}
  \item [(i)] $s^*=0$. The general solution $\varphi$ of the equation (\ref{eqngsgtys}) is 
%   \begin{eqnarray}
  $$\varphi(t)=c_1t+c_2.$$%\nonumber
%  \end{eqnarray}
  Since $\varphi(t)>0$ for every $t\in\mathbb{R}$ we necessarily have $c_1=0$ and $c_2>0$. Thus, the conformal factor satisfies $e^{u(t)}=\varphi^{-1}(t)=\frac{1}{c_2}=\alpha$ with $\alpha>0$.
  Hence, $u$ must be a constant function on $\mathbb{R}$.
  \item [(ii)] $s^*>0$. The general solution $\varphi$ of the equation (\ref{eqngsgtys}) is 
  \begin{eqnarray}
  \varphi(t)=c_3e^{\sqrt{s^*}t}+c_4e^{-\sqrt{s^*}t}.\nonumber
  \end{eqnarray}
  \item [(iii)]$s^*<0$. The general solution $\varphi$ of the equation (\ref{eqngsgtys}) is 
%   \begin{eqnarray}
   $\varphi(t)=c_5 cos\Big(\sqrt{-s^*}t\Big)+c_6 sin\Big(\sqrt{-s^*}t\Big).$ %\nonumber
%   \end{eqnarray}
  Since $\varphi$ is positive, one chooses $c_5$ and $c_6$ such that $c_5 cos\Big(\sqrt{-s^*}t\Big)+c_6 sin\Big(\sqrt{-s^*}t\Big)>0$ for every $t\in\mathbb{R}$.
 \end{itemize}

 Conversely, if one of the cases $(i)$, $(ii)$ and $(iii)$ is holds then $e^{u}F$ is $R$-Einstein.
\end{proof}
\begin{exe} Let $\stackrel{\oldstylenums{2}}{F}$ be a Finslerian metric on the sphere 
$\mathbb{S}^{n-1}$ with positive constant flag curvature $k=1$. We can show $\stackrel{\oldstylenums{2}}{F}$ is of horizantal scalar curvature 
$\textbf{Scal}_{\stackrel{\oldstylenums{2}}{F}}^H=(n-1)(n-2)$. Then the Finslerian metric 
$F=\sqrt{t^2+\stackrel{\oldstylenums{2}}{F}^2}$ is 
locally conformal to the $R$-Einstein metric $\widetilde{F}=cosh^{-1}tF$ for $t\in (1,\infty)$.
\end{exe}
\section{Non-product metrics locally conformally
 $R$-Einstein}\label{Section7}
 We define the following.
 \begin{definition}\label{defi12}
 Let $(M,F)$ be a Finslerian manifold of dimension $n\geq3$. The Finslerian analogous of  
  \begin{itemize} 
   \item [(1)] the Schouten tensor
  over $(M,F)$ is the $(0,0;1,1)$-tensor given by
  \begin{eqnarray}\label{120c}
    \textbf{S}_F^H:=\frac{1}{n-2}\left(\textbf{Ric}_F^H-\frac{1}{2(n-1)}\textbf{Scal}_F^H\underline{g}\right).
   \end{eqnarray}
  \item[(2)]the Weyl tensor over $(M,F)$ is the $(0,0;2,2)$-tensor defined by
   \begin{eqnarray}
    \textbf{W}_F^H:=\textbf{R}-g\odot \textbf{S}_F^H.
    \end{eqnarray}
Its components in a local coordinate are defined as follows, 
   \begin{eqnarray}
 \textbf{W}_F(\partial_l,\partial_i,\hat{\partial}_j,\hat{\partial}_k)&=&
   \textbf{R}(\partial_l,\partial_i,\hat{\partial}_j,\hat{\partial}_k)\nonumber\\
 &&-g_{lj}\textbf{S}_F(\partial_i,\hat{\partial}_k)-g_{ik}\textbf{S}_F(\partial_l,\hat{\partial}_j)\nonumber\\
 &&+g_{lk}\textbf{S}_F(\partial_i,\hat{\partial}_j)+g_{ij}\textbf{S}_F(\partial_l,\hat{\partial}_k).\label{weyl}
   \end{eqnarray}
   \item[(3)]the Cotton-York tensor of $(M,F)$ is the $(0,0;1,2)$-tensor $ \textbf{C}_F^H$ defined by
   \begin{eqnarray}
    \textbf{C}_F^H(\xi,X,Y):=\left(\nabla_X\textbf{S}_F^H\right)(\xi,Y)-\left(\nabla_Y\textbf{S}_F^H\right)(\xi,X)\label{CotSch2}
   \end{eqnarray}
   for every $\xi\in \Gamma(\pi^*TM)$ and $X,Y\in \chi(\mathring{T}M)$. In a local chart,
   \begin{eqnarray}
    \textbf{C}_F(\partial_i,\hat{\partial}_j,\hat{\partial}_k)&=&\left(\nabla_j\textbf{S}_F\right)(\partial_i,\hat{\partial}_k)
    -\left(\nabla_k\textbf{S}_F\right)(\partial_i,\hat{\partial}_j).\label{122''}
   \end{eqnarray}
   \end{itemize}
  \end{definition}
  In dimension greater than $3$, we introduce the following tensor.
    \begin{definition}\label{defi12c2}
The Finslerian analogous of Bach tensor for a Finslerian manifold $(M,F)$ is the $(1,1;0;0)$-tensor $ \textbf{B}_F^H$ defined by
   \begin{eqnarray}
    \textbf{B}_F(\partial_i,\hat{\partial}_j)=\nabla^{k}\textbf{C}_F(\partial_i,\hat{\partial}_j,\hat{\partial}_k)
    +\textbf{S}_F^{lk}\textbf{W}_F(\partial_l,\partial_i,\hat{\partial}_k,\hat{\partial}_j).
   \end{eqnarray}
    \end{definition}
  
  We have the following properties.
  \begin{lemma}\label{lem10}
    Let $(M,F)$ be a Finslerian manifold of dimension $n\geq3$. Then,
  \begin{itemize}
  \item[(1)]the Finslerian analogous of Weyl and of Cotton-York tensors are related as follows:
   \begin{eqnarray}
    \nabla^l\textbf{W}_F(\partial_l,\partial_i,\hat{\partial}_j,\hat{\partial}_k)=
    (n-3)\textbf{C}_F(\partial_i,\hat{\partial}_j,\hat{\partial}_k)\label{C-W}\nonumber
   \end{eqnarray}
   \item[(2)] if $F$ is horizontally an Einstein metric  
   then its Finslerian Cotton-York tensor vanishes
   \begin{eqnarray}
    \textbf{C}_F(\partial_i,\hat{\partial}_j,\hat{\partial}_k)=0.\nonumber\label{C02}.
    \end{eqnarray}
  \end{itemize}
  \end{lemma}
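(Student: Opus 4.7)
\textbf{Proof proposal for Lemma \ref{lem10}.} The plan is to mimic the classical Riemannian derivation of the identity $\operatorname{div} W = (n-3)\, C$, but working entirely with the horizontal curvature $\textbf{R}$ of the Chern (equivalently Cartan, by Proposition \ref{pro01b}) connection. The only structural facts I will need are: (a) the horizontal Bianchi identity of Lemma \ref{lem3}; (b) the once- and twice-contracted forms of it that already appeared in the proof of Lemma \ref{proShu}, in particular $\nabla^{i}\textbf{Ric}_{F}^{H}(\partial_{i},\hat{\partial}_{j}) = \frac{1}{2}\nabla_{j}\textbf{Scal}_{F}^{H}$; (c) the fact that the Chern connection is metric-compatible along horizontal directions, since for any horizontal $X$ we have $\theta(X)=0$ by the computation (\ref{kjs}), and therefore the Cartan term in Lemma \ref{lem1}(ii) disappears, giving $\nabla_{X}\underline{g}=0$; (d) the definition (\ref{120c}) of $\textbf{S}_{F}^{H}$, from which one reads off both $\textbf{Ric}_{F}^{H}=(n-2)\textbf{S}_{F}^{H}+\frac{1}{2(n-1)}\textbf{Scal}_{F}^{H}\,\underline{g}$ and, via contraction, $\nabla^{l}\textbf{S}_{F}^{H}(\partial_{l},\hat{\partial}_{j}) = \frac{1}{2(n-1)}\,\nabla_{j}\textbf{Scal}_{F}^{H}$.

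For part (1), I start from the component expression (\ref{weyl}) and apply $\nabla^{l}$. Because $\nabla_{X}\underline{g}=0$ for horizontal $X$, every $g_{ab}$ factor commutes with the horizontal derivative, so
\begin{eqnarray*}
\nabla^{l}\textbf{W}_{F}(\partial_{l},\partial_{i},\hat{\partial}_{j},\hat{\partial}_{k})
&=& \nabla^{l}\textbf{R}(\partial_{l},\partial_{i},\hat{\partial}_{j},\hat{\partial}_{k})
- \nabla_{j}\textbf{S}_{F}^{H}(\partial_{i},\hat{\partial}_{k}) + \nabla_{k}\textbf{S}_{F}^{H}(\partial_{i},\hat{\partial}_{j}) \\
&& - g_{ik}\,\nabla^{l}\textbf{S}_{F}^{H}(\partial_{l},\hat{\partial}_{j}) + g_{ij}\,\nabla^{l}\textbf{S}_{F}^{H}(\partial_{l},\hat{\partial}_{k}).
\end{eqnarray*}
The once-contracted Bianchi identity from Lemma \ref{lem3} gives $\nabla^{l}\textbf{R}(\partial_{l},\partial_{i},\hat{\partial}_{j},\hat{\partial}_{k}) = \nabla_{j}\textbf{Ric}_{F}^{H}(\partial_{i},\hat{\partial}_{k}) - \nabla_{k}\textbf{Ric}_{F}^{H}(\partial_{i},\hat{\partial}_{j})$. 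Substituting the Schouten rewriting of $\textbf{Ric}_{F}^{H}$ on the right, and the contracted Bianchi expression for $\nabla^{l}\textbf{S}_{F}^{H}(\partial_{l},\hat{\partial}_{\bullet})$ in the last two terms, the $\nabla\textbf{Scal}_{F}^{H}$ contributions cancel pairwise and the coefficient of $\nabla_{j}\textbf{S}_{F}^{H}(\partial_{i},\hat{\partial}_{k})-\nabla_{k}\textbf{S}_{F}^{H}(\partial_{i},\hat{\partial}_{j})$ collapses from $(n-2)-1$ to $n-3$, yielding (\ref{CotSch2}) multiplied by $n-3$.

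For part (2), the $R$-Einstein hypothesis $\textbf{Ric}_{F}^{H}=\frac{1}{n}\textbf{Scal}_{F}^{H}\underline{g}$ together with the Finslerian Schur Lemma \ref{proShu} forces $\textbf{Scal}_{F}^{H}$ to be constant whenever $n\geq 3$. Plugging this into (\ref{120c}) gives $\textbf{S}_{F}^{H} = \lambda\,\underline{g}$ where $\lambda=\frac{\textbf{Scal}_{F}^{H}}{2n(n-1)}$ is a constant. Because $\nabla_{X}\underline{g}=0$ for every horizontal $X$, we conclude $\nabla_{X}\textbf{S}_{F}^{H}=0$ in all horizontal directions, so (\ref{122''}) immediately yields $\textbf{C}_{F}(\partial_{i},\hat{\partial}_{j},\hat{\partial}_{k})=0$.

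The substantive work is entirely in part (1); part (2) is a short corollary. The main obstacle is the careful bookkeeping of the contracted Bianchi identity in the pulled-back bundle formalism: one must verify that raising/lowering indices with $\underline{g}$ commutes with $\nabla^{l}$ in the horizontal direction, and that the contraction of the cyclic identity of Lemma \ref{lem3} really produces the familiar Riemannian-style form $\nabla^{l}R_{lijk}=\nabla_{j}R_{ik}-\nabla_{k}R_{ij}$ without spurious Cartan or mixed-curvature terms. This is exactly what the computation performed for Lemma \ref{proShu} already confirms, so no new identity is required beyond what the excerpt provides.
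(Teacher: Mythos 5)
Your proposal is correct and follows essentially the same route as the paper: part (1) is obtained by applying $\nabla^l$ to the component formula (\ref{weyl}) and invoking the once- and twice-contracted horizontal Bianchi identities from Lemma \ref{lem3} (as already exploited in Lemma \ref{proShu}), and part (2) combines the Schur-type Lemma \ref{proShu} with the $R$-Einstein condition to kill the derivatives of the Schouten tensor. Your bookkeeping via the Schouten tensor (rather than expanding everything back into $\textbf{Ric}_F^H$ and $\textbf{Scal}_F^H$ as the paper does) and your explicit justification that $\nabla_X\underline{g}=0$ for horizontal $X$ are only cosmetic differences.
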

  \begin{proof}
   \begin{itemize}
    \item [(1)] Contracting the Finslerin second Bianchi identity given in Lemma \ref{lem3} we get
     $g^{sl}\Big[\nabla_j\textbf{R}_{liks}+ \nabla_k\textbf{R}_{lisj}
   + \nabla_s\textbf{R}_{lijk}\Big]=0$. 
   Equivalent
$$-\nabla_j\textbf{Ric}_F(\partial_i,\hat{\partial}_k)
  + \nabla_k\textbf{Ric}_F(\partial_i,\hat{\partial}_j)
  + \nabla^l\textbf{R}_{lijk}=0.$$
  Using this relation we have
   \begin{eqnarray}
  \nabla^l\textbf{W}_F(\partial_l,\partial_i,\hat{\partial}_j,\hat{\partial}_k)
  &=&g^{ls}\nabla_s\textbf{W}_F(\partial_l,\partial_i,\hat{\partial}_j,\hat{\partial}_k)\nonumber\\
  &\stackrel{(\ref{weyl})}{=}&g^{ls}\nabla_s\Big\{
   \textbf{R}(\partial_l,\partial_i,\hat{\partial}_j,\hat{\partial}_k)\nonumber\\
&&-\frac{1}{n-2}\left[\textbf{Ric}_F(\partial_l,\hat{\partial}_j)g_{ik}-\textbf{Ric}_F(\partial_i,\hat{\partial}_j)g_{lk}\right.\nonumber\\
   &&\left. +\textbf{Ric}_F(\partial_i,\hat{\partial}_k)g_{lj}-\textbf{Ric}_F(\partial_l,\hat{\partial}_k)g_{ij}\right]\nonumber\\
   &&+\frac{\textbf{Scal}_F^H}{(n-1)(n-2)}\left[
   g_{ij}g_{lk}-g_{ik}g_{lj}\right]\Big\}\nonumber\\
  &=&\frac{n-3}{n-2} \Big(\nabla_j\textbf{Ric}_F(\partial_i,\hat{\partial}_k)
  - \nabla_k\textbf{Ric}_F(\partial_i,\hat{\partial}_j)\Big)\nonumber\\
  &&-\frac{n-3}{2(n-1)(n-2)} \Big(\nabla_j\textbf{Scal}_F^H
   g_{ik}-\nabla_k\textbf{Scal}_F^Hg_{ij}\Big)\nonumber\\
   &\stackrel{(\ref{122''})}{=}&(n-3)\textbf{C}_F(\partial_i,\hat{\partial}_j,\hat{\partial}_k).\nonumber
   \end{eqnarray}
   \item[(2)] If $F$ is $R$-Einstein then, by Lemma \ref{proShu},  $\textbf{Scal}_F^H$ is constant. 
   Hence,
   \begin{eqnarray}
    \textbf{C}_F(\partial_i,\hat{\partial}_j,\hat{\partial}_k)
    &=&\left(\nabla_j\textbf{S}_F\right)(\partial_i,\hat{\partial}_k)
    -\left(\nabla_k\textbf{S}_F\right)(\partial_i,\hat{\partial}_j)\nonumber\\
    &=&\nabla_j\Big[\frac{1}{n-2}\left(\textbf{Ric}_F^H-\frac{1}{2(n-1)}\textbf{Scal}_F^H\underline{g}\right)
    \Big](\partial_i,\hat{\partial}_k)\nonumber\\
    &&-\nabla_k\Big[\frac{1}{n-2}\left(\textbf{Ric}_F^H-\frac{1}{2(n-1)}\textbf{Scal}_F^H\underline{g}\right)
    \Big](\partial_i,\hat{\partial}_j).\nonumber
    \end{eqnarray}
    Hence, formula (\ref{EinstC1}) implies relation (\ref{C02}).
  \end{itemize}
  \end{proof}
   \begin{lemma}\label{lem11}
    Let $F$ be a Finslerian metric on a manifold of dimension $n\geq3$. If $\widetilde{F}$ is a conformal deformation of $F$, with 
    $\widetilde{F}=e^uF$, then
  \begin{itemize} 
   \item [(1)]the horizontal Schouten tensor behaves as follows:
   \begin{eqnarray}
   \widetilde{\textbf{S}}_{\widetilde{F}}^H(\partial_i,\hat{\partial}_j)&=&\textbf{S}_F^H(\partial_i,\hat{\partial}_j)
   -\nabla_j\nabla_iu+\nabla_iu\nabla_ju
   +hg_{ij}\label{1006}
   \end{eqnarray}
   where
  \begin{eqnarray}
  h:&=&-\frac{1}{2}\nabla^ku\nabla_ku
  +\frac{\nabla^sug^{kl}}{n(n-1)}\Big[(n+8)\mathcal{B}_l^{s_1}\mathcal{A}_{sks_1}
  -2\mathcal{B}_s^{s_1}\mathcal{A}_{lks_1}\Big]\nonumber\\
  &&+\frac{1}{2n(n-1)}g^{kl}g^{rs}\Big\{\left[g(\Theta(\hat{\partial}_s,\textbf{h}(\Theta_{lr})),\partial_k)
-g(\Theta(\hat{\partial}_l,\textbf{h}(\Theta_{rs})),\partial_k)\right]\nonumber\\
&&+\Big[g((\nabla_s\Theta)_{lr},\partial_k)
-g((\nabla_l\Theta)_{rs},\partial_k)\Big]\Big\}.\nonumber
   \end{eqnarray}
  \item[(2)] if a horizontal $(1,1,0)$-tensor $\textbf{T}_F^H$ satisfies $\textbf{T}_F(\partial_i,\hat{\partial}_j)
  =\textbf{T}_F(\pi_*\hat{\partial}_j,\textbf{h}(\partial_i))$, for any $i,j=1,...,n$,
  then
   \begin{eqnarray}\label{1006a}
   \left(\widetilde{\nabla}_j\textbf{T}_F\right)(\partial_i,\hat{\partial}_k)
   &=&\Big(\nabla_j\textbf{T}_F\Big)(\partial_i,\hat{\partial}_k)\nonumber\\
   &&-2\nabla_ju\textbf{T}_{ik}-\nabla_iu\textbf{T}_{jk}-\nabla_ku\textbf{T}_{ij}\nonumber\\
   &&+g_{ij}\textbf{T}_F(\triangledown u,\partial_k)+g_{jk}\textbf{T}_F(\partial_i,\textbf{h}(\triangledown u))
   \nonumber\\
   &&-\textbf{T}_F(\partial_i,\textbf{h}(\Theta)_{jk})-\textbf{T}_F(\Theta_{ij},\hat{\partial}_k).
   \end{eqnarray}
   \item[(3)] the horizontal Cotton-York tensor behaves as follows:
   \begin{eqnarray}
    \widetilde{\textbf{C}}_{\widetilde{F}}(\partial_i,\hat{\partial}_j,\hat{\partial}_k)
    =\textbf{C}_F(\partial_i,\hat{\partial}_j,\hat{\partial}_k)
    +\textbf{W}_F(\triangledown u,\partial_i,\hat{\partial}_j,\hat{\partial}_k)
    +{\varPsi}_u^{\textbf{C}_F^H}(\partial_i,\hat{\partial}_j,\hat{\partial}_k)\nonumber
   \end{eqnarray}
   where
   \begin{eqnarray}
   {\varPsi}_u^{\textbf{C}_F^H}(\partial_i,\hat{\partial}_j,\hat{\partial}_k)
    &=&\nabla_j\Big(\nabla_iu\nabla_ku+hg_{ik}\Big)
 -\nabla_k\Big(\nabla_iu\nabla_ju+hg_{ij}\Big)\nonumber\\
   &&+\Gamma_{ik}^l\nabla_lu\nabla_ju
  -\Gamma_{ij}^l\nabla_lu\nabla_ku\nonumber\\
  &&+g_{jl}\nabla^lu\Big(
   \nabla_k\nabla_iu-hg_{ik}\Big)
   -g_{kl}\nabla^lu\Big(
   \nabla_j\nabla_iu-hg_{ij}\Big)\nonumber\\
   &&+g_{ij}\nabla_k\nabla_{\textbf{h}(\triangledown u)}u
  +g_{ik}\nabla_j\nabla_{\textbf{h}(\triangledown u)}u\nonumber\\
 &&
   -\nabla_j\nabla_{\Theta_{ik}}u+\nabla_{\Theta_{ik}}u\nabla_ju
 +\nabla_k\nabla_{\Theta_{ij}}u-\nabla_{\Theta_{ij}}u\nabla_ku.\nonumber
   \end{eqnarray}
  \end{itemize}
  \end{lemma}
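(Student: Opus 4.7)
The plan is to prove the three items of Lemma \ref{lem11} in order, exploiting throughout the fact that the conformal behaviour of the trace-free Ricci tensor $\textbf{E}_F^H$ is already known from Lemma \ref{lemgrosb}.

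For item (1), the starting point is the algebraic rewriting $\textbf{S}_F^H = \frac{1}{n-2}\textbf{E}_F^H + \frac{1}{2n(n-1)}\textbf{Scal}_F^H\underline{g}$, which follows from (\ref{120c}) and the decomposition $\textbf{Ric}_F^H = \textbf{E}_F^H + \frac{1}{n}\textbf{Scal}_F^H\underline{g}$. I would then substitute the transformation (\ref{122b}) for $\widetilde{\textbf{E}}_{\widetilde{F}}^H$, and independently derive the transformation of $\textbf{Scal}_F^H$ by contracting (\ref{122b}) with $g^{ij}$ and using that $\widetilde{\textbf{E}}_{\widetilde{F}}^H$ is trace-free with respect to $\widetilde{g} = e^{2u}g$ (so $g^{ij}\widetilde{E}_{ij}=0$). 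Collecting the Hessian and quadratic first-derivative pieces separately from the residual scalar terms (the genuinely Finslerian corrections involving $\mathcal{B}$, $\mathcal{A}$, and $\Theta$) into a coefficient of $g_{ij}$ will identify the function $h$ and produce (\ref{1006}).

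For item (2), I would compute the difference $\widetilde{\Gamma}_{jk}^i - \Gamma_{jk}^i$ of Chern symbols under $\widetilde{F} = e^u F$ using (\ref{8775244}), while accounting for the induced change of non-linear connection via (\ref{562564654}). The Riemannian piece of this difference is the familiar $\delta_j^i\nabla_k u + \delta_k^i\nabla_j u - g_{jk}\nabla^i u$, and the Finslerian correction produces the $\Theta$-type terms. Substituting these symbol differences into Definition \ref{defi8} applied to a $(1,1;0)$-tensor $\textbf{T}_F^H$ satisfying the assumed exchange symmetry yields (\ref{1006a}) termwise; the symmetry hypothesis is precisely what causes the two different ``$\nabla u\,\textbf{T}$'' slots to combine cleanly into the stated form.

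For item (3), I would apply (\ref{CotSch2}) to $\widetilde{\textbf{S}}_{\widetilde{F}}^H$, substitute item (1) for the Schouten tensor and item (2) for the covariant derivatives (noting that the Schouten tensor does satisfy the symmetry of (2), since $\textbf{Ric}_F^H$ does). The antisymmetrisation over $(j,k)$ annihilates the diagonal $hg_{i\bullet}$ contributions except through $\nabla u$-couplings, and the surviving Ricci-times-$\nabla u$ combination matches the right-hand side of (\ref{weyl}) contracted with $\triangledown u$; this is exactly $\textbf{W}_F(\triangledown u,\partial_i,\hat{\partial}_j,\hat{\partial}_k)$. All the remaining non-Weyl terms (Hessians, quadratic $\nabla u$'s, and the $\Theta$- and $\mathcal{A}$-corrections) are then packaged into $\varPsi_u^{\textbf{C}_F^H}$ by definition.

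The main obstacle is not conceptual but combinatorial: tracking the Cartan contributions encoded by $\mathcal{B}$ (see (\ref{djgdhdgkgd1})--(\ref{degjeh8b})) and the Ehresmann-curvature terms $\Theta$ through every step. These auxiliary tensors have no Riemannian counterpart, so every conformal-variation formula carries extra terms that must either telescope into the Weyl piece in item (3) or survive as a component of $\varPsi_u^{\textbf{C}_F^H}$. A related subtlety is that the Chern connection is only \emph{almost} $g$-compatible (property (ii) of Lemma \ref{lem1}), so commutators of second covariant derivatives of $u$ generate additional $\mathcal{A}$-terms whose bookkeeping across items (1)--(3) must remain consistent; getting the definition of $h$ (and the precise form of $\varPsi_u^{\textbf{C}_F^H}$) right is essentially the whole content of this lemma.
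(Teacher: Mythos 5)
There is a genuine gap in your treatment of item (1). The Schouten tensor decomposes as $\textbf{S}_F^H=\frac{1}{n-2}\textbf{E}_F^H+\frac{1}{2n(n-1)}\textbf{Scal}_F^H\,\underline{g}$ (your algebra here is correct), but Lemma \ref{lemgrosb} controls only the conformal behaviour of the trace-free part $\textbf{E}_F^H$, and the pure-trace part cannot be recovered from it. Your proposed device --- contracting (\ref{122b}) with $g^{ij}$ and using $g^{ij}\widetilde{E}_{ij}=0$ --- produces $0=0+(\text{trace of the correction terms})$, i.e.\ a consistency identity among the $H_u$, $\Delta^Hu$ and ${\varPsi}_u^{\textbf{E}_F^H}$ contributions in which neither $\textbf{Scal}_F^H$ nor $\widetilde{\textbf{Scal}}_{\widetilde{F}}^H$ appears; no transformation law for the scalar curvature can be extracted this way, since adding any multiple of $\underline{g}$ to the Ricci tensor leaves $\textbf{E}_F^H$ unchanged. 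The paper closes exactly this hole by importing the conformal transformation of the full horizontal Ricci tensor and of the horizontal scalar curvature from Lemmas 5 and 6 of \cite{biNibaruta2} and substituting them into (\ref{120c}); without some such independent input your identification of the function $h$ in (\ref{1006}) cannot go through.

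Items (2) and (3) follow essentially the paper's route. For (2) the paper does not recompute Christoffel-symbol differences but uses the known formula $\widetilde{\nabla}_X\xi-\nabla_X\xi=du(\pi_*X)\xi+du(\xi)\pi_*X-g(\pi_*X,\xi)\triangledown u+\Theta(X,\textbf{h}(\xi))$ from the earlier conformal-deformation papers and feeds it into Definition \ref{defi8}; this is the coordinate-free version of your plan, with the $\Theta$-terms playing the role of your ``change of non-linear connection''. Item (3) is then obtained, as you describe, by combining (1) and (2) through (\ref{CotSch2}) and recognising the $\nabla u$--Ricci combination as $\textbf{W}_F(\triangledown u,\cdot,\cdot,\cdot)$ via (\ref{weyl}). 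One caveat: you justify applying item (2) to $\widetilde{\textbf{S}}_{\widetilde{F}}^H$ by asserting that $\textbf{Ric}_F^H$ has the required exchange symmetry; for the Chern connection this symmetry is not automatic and deserves at least a remark, although the paper makes the same tacit assumption.
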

  \begin{proof}
   The assertion $(1)$ in Lemma \ref{lem11} is obtained by using the relation (\ref{120c}) and the lemmas $5$ and $6$ in \cite{biNibaruta2}.

    To obtain the relation (\ref{1006a}) in Lemma \ref{lem11}, we consider a $(1,1,0)$-tensor $\textbf{T}_F^H$ on $(M,F)$. Then
   for every vector fields $X,Y$ on $\mathring{T}M$ and for any section $\xi$ of the vector bundle 
   $\pi^*TM$, we obtain
   \begin{eqnarray}
    \left(\widetilde{\nabla}_X\textbf{T}_F^H\right)(\xi,Y)=\widetilde{\nabla}_X(\textbf{T}_F^H(\xi,Y))-\textbf{T}_F^H(\widetilde{\nabla}_X\xi,Y)
    -\textbf{T}_F^H(\xi,\textbf{h}(\widetilde{\nabla}_X\pi_*Y))
   \end{eqnarray}
 where $\widetilde{\nabla}_X$ is the covariant derivative with respect to $\widetilde{F}$ in a given direction $X$. 
 We have 
 \begin{eqnarray}
    \left(\widetilde{\nabla}_X\textbf{T}_F^H\right)(\xi,Y)
    &=&\nabla_X(\textbf{T}_F^H(\xi,Y))
    -\textbf{T}_F^H\Big(\nabla_X\xi+du(\pi_*X)\xi
  +du(\xi)\pi_*X\nonumber\\
   &&-g(\pi_*X,\xi)\triangledown u+\Theta(X,\textbf{h}(\xi)),Y\Big)\nonumber\\
   && -\textbf{T}_F^H(\xi,\textbf{h}(\nabla_X\pi_*Y+du(\pi_*X)\pi_*Y\nonumber\\
   &&+du(\pi_*Y)\pi_*X-g(\pi_*X,\pi_*Y)\triangledown u+\Theta(X,Y)))\nonumber\\
   &=&\nabla_X(\textbf{T}_F(\xi,\hat{Y}))-2(\nabla_Xu)\textbf{T}_F(\xi,\hat{Y})\nonumber\\
   &&-(\nabla_{\textbf{h}(\xi)}u)\textbf{T}_F(\pi_*X,\hat{Y})-(\nabla_Yu)\textbf{T}_F(\xi,\hat{X})\nonumber\\
    &&+g(\xi,\pi_*X)\textbf{T}_F(\triangledown u,\hat{Y})+g(\pi_*X,\pi_*Y)\textbf{T}_F(\xi,\hat{\triangledown u})\nonumber\\
    &&-\textbf{T}_F(\xi,\textbf{h}(\Theta(X,Y))-\textbf{T}_F(\Theta(\textbf{h}(\xi),X),Y).\nonumber
   \end{eqnarray}
   Setting $\xi=\partial_i$, $X=\hat{\partial}_j$ and $Y=\hat{\partial}_k$, we obtain the relation.

From the these two properties, we obtain the assertion $(3)$ in the Lemma \ref{lem11}.
  \end{proof}
\subsection{Proof of the Theorem \ref{theo1a3} in dimension $n=3$}
 Let $F$ and $\widetilde{F}$ be two conformal Finslerian metric, with $\widetilde{F}=e^uF$, 
 on a manifold of dimension $n\geq3$. Then 
   \begin{eqnarray}
    \widetilde{\textbf{C}}_{\widetilde{F}}(\partial_i,\hat{\partial}_j,\hat{\partial}_k)
   &\stackrel{(\ref{122''})}{=}&\left(\widetilde{\nabla}_j{\widetilde{\textbf{S}}}_{\widetilde{F}}\right)(\partial_i,\hat{\partial}_k)
    -\left(\widetilde{\nabla}_k\widetilde{\textbf{S}}_{\widetilde{F}}\right)(\partial_i,\hat{\partial}_j)\nonumber\\
   &\stackrel{(\ref{1006a})}{=}
    &\left(\nabla_j{\widetilde{\textbf{S}}}_{\widetilde{F}}\right)(\partial_i,\hat{\partial}_k)
   -\left(\nabla_k{\widetilde{\textbf{S}}}_{\widetilde{F}}\right)(\partial_i,\hat{\partial}_j)
   \nonumber\\
  && 
   -\nabla_ju{\widetilde{\textbf{S}}}_{\widetilde{F}}(\partial_i,\hat{\partial}_k)
   +\nabla_ku{\widetilde{\textbf{S}}}_{\widetilde{F}}(\partial_i,\hat{\partial}_j)\nonumber\\
   &&+g_{ij}{\widetilde{\textbf{S}}}_{\widetilde{F}}
   (\triangledown u,\partial_k)
  -g_{ik}{\widetilde{\textbf{S}}}_{\widetilde{F}}
   (\triangledown u,\partial_j)
   \nonumber\\
   &&+{\widetilde{\textbf{S}}}_{\widetilde{F}}(\Theta_{ik},\hat{\partial}_j)
   -{\widetilde{\textbf{S}}}_{\widetilde{F}}(\Theta_{ij},\hat{\partial}_k).\nonumber\\
    &=&\nabla_j\Big[{\widetilde{\textbf{S}}}_{\widetilde{F}}(\partial_i,\hat{\partial}_k)\Big]
    -{\widetilde{\textbf{S}}}_{\widetilde{F}}\big(\nabla_j\partial_i,\hat{\partial}_k\big)\nonumber\\
    &&-{\widetilde{\textbf{S}}}_{\widetilde{F}}\big(\partial_i,\textbf{h}(\nabla_j\pi_*\hat{\partial}_k)\big)
    -\Big\{\nabla_k\Big[{\widetilde{\textbf{S}}}_{\widetilde{F}}(\partial_i,\hat{\partial}_j)\Big]\nonumber\\
    &&-{\widetilde{\textbf{S}}}_{\widetilde{F}}\big(\nabla_k\partial_i,\hat{\partial}_j\big)
    -{\widetilde{\textbf{S}}}_{\widetilde{F}}\big(\partial_i,\textbf{h}(\nabla_k\pi_*\hat{\partial}_j)\big)\Big\}\nonumber\\
    && 
   -\nabla_ju{\widetilde{\textbf{S}}}_{\widetilde{F}}(\partial_i,\hat{\partial}_k)
   +\nabla_ku{\widetilde{\textbf{S}}}_{\widetilde{F}}(\partial_i,\hat{\partial}_j)\nonumber\\
   &&+g_{ij}{\widetilde{\textbf{S}}}_{\widetilde{F}}
   (\triangledown u,\partial_k)
  -g_{ik}{\widetilde{\textbf{S}}}_{\widetilde{F}}
   (\triangledown u,\partial_j)
   \nonumber\\
   &&+{\widetilde{\textbf{S}}}_{\widetilde{F}}(\Theta_{ik},\hat{\partial}_j)
   -{\widetilde{\textbf{S}}}_{\widetilde{F}}(\Theta_{ij},\hat{\partial}_k)\nonumber
   \end{eqnarray}
   \begin{eqnarray}
   \widetilde{\textbf{C}}_{\widetilde{F}}(\partial_i,\hat{\partial}_j,\hat{\partial}_k)&=&\textbf{C}_{\widetilde{F}}(\partial_i,\hat{\partial}_j,\hat{\partial}_k)\nonumber\\
&&+\nabla_j\Big[
   -\nabla_kg_{il}\nabla^lu+\nabla_iu\nabla_ku+hg_{ik}\Big]\nonumber\\
   &&-\nabla_k\Big[
   -\nabla_jg_{il}\nabla^lu+\nabla_iu\nabla_ju+hg_{ij}\Big]\nonumber\\
   &&+\Big[
   -\nabla_j\nabla_{\nabla_k\partial_i}u+\nabla_{\nabla_k\partial_i}u\nabla_ju+hg(\nabla_k\partial_i,\pi_*\hat{\partial_j})\Big]\nonumber\\
   &&-\Big[
   -\nabla_k\nabla_{\nabla_j\partial_i}u+\nabla_{\nabla_j\partial_i}u\nabla_ku+hg(\nabla_j\partial_i,\pi_*\hat{\partial_k})\Big]\nonumber\\  
  &&-g_{jl}\nabla^lu\Big[\textbf{S}_F^H(\partial_i,\hat{\partial}_k)
   -\nabla_k\nabla_iu+\nabla_iu\nabla_ku+hg_{ik}\Big]\nonumber\\
   && 
   +g_{kl}\nabla^lu\Big[\textbf{S}_F^H(\partial_i,\hat{\partial}_j)
   -\nabla_j\nabla^lu\partial_l+\nabla_iu\nabla_ku+hg_{ij}\Big]\nonumber\\
   &&+g_{ij}\Big[\nabla^lu\textbf{S}_F^H(\partial_l,\hat{\partial}_k)
   -\nabla_k\nabla_{\textbf{h}(\triangledown u)}u\Big]\nonumber\\
  &&-g_{ik}\Big[\nabla^lu\textbf{S}_F^H(\partial_l,\hat{\partial}_j)
   -\nabla_j\nabla_{\textbf{h}(\triangledown u)}u\Big]\nonumber\\
&&+\Big[\textbf{S}_F^H(\Theta_{ik},\hat{\partial}_j)
   -\nabla_j\nabla_{\Theta_{ik}}u+\nabla_{\Theta_{ik}}u\nabla_ju\Big]\nonumber\\
  &&-\Big[\textbf{S}_F^H(\Theta_{ij},\hat{\partial}_k)
   -\nabla_k\nabla_{\Theta_{ij}}u+\nabla_{\Theta_{ij}}u\nabla_ku\Big].\nonumber
   \end{eqnarray}
   Therefore
   \begin{eqnarray}
    \widetilde{\textbf{C}}_{\widetilde{F}}(\partial_i,\hat{\partial}_j,\hat{\partial}_k)
&\stackrel{}{=}&\textbf{C}_{\widetilde{F}}(\partial_i,\hat{\partial}_j,\hat{\partial}_k)
    +\nabla^lu\textbf{W}_F(\partial_l,\partial_i,\hat{\partial}_j,\hat{\partial}_k)
    +{\varPsi}_u^{\textbf{C}_F^H}(\partial_i,\hat{\partial}_j,\hat{\partial}_k),\nonumber
   \end{eqnarray}
   where 
    \begin{eqnarray}
   {\varPsi}_u^{\textbf{C}_F^H}(\partial_i,\hat{\partial}_j,\hat{\partial}_k)
 &=&\nabla_j\Big(\nabla_iu\nabla_ku+hg_{ik}\Big)
 -\nabla_k\Big(\nabla_iu\nabla_ju+hg_{ij}\Big)\nonumber\\
   &&+\Gamma_{ik}^l\nabla_lu\nabla_ju
  -\Gamma_{ij}^l\nabla_lu\nabla_ku+g_{jl}\nabla^lu\Big(
   \nabla_k\nabla_iu-hg_{ik}\Big)\nonumber\\
   && 
   -g_{kl}\nabla^lu\Big(
   \nabla_j\nabla_iu-hg_{ij}\Big)
   +g_{ij}\nabla_k\nabla_{\textbf{h}(\triangledown u)}u
  +g_{ik}\nabla_j\nabla_{\textbf{h}(\triangledown u)}u\nonumber\\
&&
   -\nabla_j\nabla_{\Theta_{ik}}u+\nabla_{\Theta_{ik}}u\nabla_ju
 +\nabla_k\nabla_{\Theta_{ij}}u-\nabla_{\Theta_{ij}}u\nabla_ku.\nonumber
 \end{eqnarray}
 
 If $\widetilde{F}$ is $R$-Einstein then by the Lemma \ref{lem10}
 \begin{eqnarray}
  \textbf{C}_F(\partial_i,\hat{\partial}_j,\hat{\partial}_k)+\textbf{W}_F(\triangledown u,\partial_i,\hat{\partial}_j,\hat{\partial}_k)
+{\varPsi}_u^{\textbf{C}_F^H}(\partial_i,\hat{\partial}_j,\hat{\partial}_k)=0.\label{ojjj50cd}
 \end{eqnarray}

 When $n=3$, the tensor $\textbf{W}_F^H$ vanishes and hence the equation (\ref{ojjj50cd}) reduces to
  \begin{eqnarray}
  \textbf{C}_F(\partial_i,\hat{\partial}_j,\hat{\partial}_k)
  +{\varPsi}_u^{\textbf{C}_F^H}(\partial_i,\hat{\partial}_j,\hat{\partial}_k)=0.\label{ojjj50c2}
 \end{eqnarray}
 The solution of this equation is $u=constant$ and $\textbf{C}_F(\partial_i,\hat{\partial}_j,\hat{\partial}_k)$.\\
 
 Conversely, if $u=constant$ and $\textbf{C}_F^H\equiv0$ then $e^uF$ is $R$-Einstein metric.
\subsection{Proof of the Theorem \ref{theo1a3} in dimension $n=4$}
From the Lemma \ref{lem10}, if $\widetilde{F}$ is $R$-Einstein metric then $\widetilde{\textbf{C}}_{\widetilde{F}}$ vanishes. 
Then the equation (\ref{ojjj50cd}) holds.

Applying $\nabla^k$ to this equation, using the Definition \ref{defi12} and the equation (\ref{ojjj50cd}) again, we get  
 \begin{eqnarray}
 0&=&\textbf{B}_F(\partial_i,\hat{\partial}_j)
 -\textbf{S}_F^{lk}\textbf{W}_F(\partial_l,\partial_i,\hat{\partial}_k,\hat{\partial}_j)\nonumber\\
  &&- \left[\nabla^k\nabla^lu-(n-3)\nabla^ku\nabla^lu\right]\textbf{W}_F(\partial_l,\partial_i,\hat{\partial}_k,\hat{\partial}_j)\nonumber\\
  &&-\nabla^k{\varPsi}_u^{\textbf{C}_F^H}(\partial_i,\hat{\partial}_j,\hat{\partial}_k)
  +{\varPsi}_u^{\textbf{C}_F^H}(\partial_i,\hat{\partial}_j,\hat{\partial}_k).\label{10003}
 \end{eqnarray}
 Since $\widetilde{F}$ is locally an $R$-Einstein metric, the equation (\ref{108cd}) is equivalent to 
 \begin{eqnarray}
  0&=&\textbf{S}_F(\partial_i,\hat{\partial}_j)-\frac{1}{n}\textbf{J}_F^Hg_{ij}-\nabla_j\nabla_iu+\nabla_iu\nabla_ju\nonumber\\
 &&+\frac{1}{n}\left(\nabla^d\nabla_du-\nabla^du\nabla_du\right)g_{ij}\nonumber\\
                  &&+\frac{(n-1)}{2n(n-2)F}\left(\nabla_ru\nabla^{q}u\right)\frac{\partial(F^2g^{rs}-2y^ry^s)}{\partial y^q}
                  g^{kl}\mathcal{A}_{skl}g_{ij}\nonumber
 \end{eqnarray}
 where $\textbf{J}_F^H$ is the trace of $\textbf{Scal}_F^H$.
Raising both indices and applying 
$\textbf{W}_F(\partial_l,\partial_i,\hat{\partial}_j,\hat{\partial}_k)$ to this equation, using the relation (\ref{C-W}) 
in Lemma \ref{lem11} and the equation (\ref{10003}) we obtain
\begin{eqnarray}\label{ojjj50c2}
 0&=&\textbf{B}_F(\partial_i,\hat{\partial}_j)+(n-4)\textbf{W}_F(\triangledown u,\partial_i,\hat{\partial}_j,\triangledown u)\nonumber\\
 &&
+\left[(n-3)\nabla^ku- \nabla^k\right]{\varPsi}_u^{\textbf{C}_F^H}(\partial_i,\hat{\partial}_j,\hat{\partial}_k).\nonumber
\end{eqnarray}

Therefore, in dimension $n=4$, we have 
% \begin{eqnarray}
 $\textbf{B}_F(\partial_i,\hat{\partial}_j)
+\Big(\nabla^ku- \nabla^k\Big){\varPsi}_u^{\textbf{C}_F^H}(\partial_i,\hat{\partial}_j,\hat{\partial}_k)=0.$
% \end{eqnarray}

Conversely, if $u=constant$ and $\textbf{B}_F^H\equiv0$ then $e^uF$ is $R$-Einstein metric.

 \end{document}